\documentclass[12pt]{amsart}
\usepackage{amssymb}
\usepackage{aurical}
\usepackage{graphicx,psfrag}

\setlength{\textwidth}{15cm}
\setlength{\topmargin}{0cm}
\setlength{\headheight}{1cm}
\setlength{\headsep}{1cm}
\setlength{\topskip}{0cm}
\setlength{\textheight}{21cm}
\setlength{\oddsidemargin}{0.5cm}
\setlength{\evensidemargin}{0.5cm}

\newtheorem{thm}{Theorem}[section]
\newtheorem{lem}[thm]{Lemma}
\newtheorem{cor}[thm]{Corollary}
\newtheorem{prop}[thm]{Proposition}
\newtheorem{ex}[thm]{Example}

\newtheorem*{prob*}{Open problem}

\theoremstyle{definition}
%  Nun die wie eine Definition geschriebenen

\newtheorem{defi}[thm]{Definition}

\theoremstyle{remark}
%  Nun die Bemerkungen

\newtheorem{rem}[thm]{Remark}
\newtheorem*{rem*}{Remark}

% Jetzt folgen erst mal die Definitionen einiger Sonderzeichen

\DeclareMathOperator{\id}{id}
\DeclareMathOperator{\s}{span}

\newcommand{\kringel}{\mathbin{\raise1pt\hbox{$\scriptstyle\circ$}}}
\newcommand{\pkt}{\mathbin{\raise0pt\hbox{$\scriptstyle\bullet$}}}

\newcommand{\C}{\mathbb{C}}

\newcommand{\Z}{\mathbb{Z}}

\newcommand{\ad}{\mathop{\rm ad}}

\newcommand{\End}{\mathop{\rm End}}
\newcommand{\Der}{\mathop{\rm Der}}

\newcommand{\diag}{\mathop{\rm diag}}

\newcommand{\Lf}{\mathfrak{f}}
\newcommand{\Lg}{\mathfrak{g}}
\newcommand{\Lh}{\mathfrak{h}}

\newcommand{\Ll}{\mathfrak{l}}

\newcommand{\Ls}{\mathfrak{s}}

\newcommand{\CX}{\mathcal{X}}

\newcommand{\abs}[1]{\lvert#1\rvert}

\newcommand{\spa}{{\rm span}}

\newcommand{\al}{\alpha}
\newcommand{\be}{\beta}
\newcommand{\ga}{\gamma}
\newcommand{\de}{\delta}

\newcommand{\la}{\lambda}
\newcommand{\om}{\omega}

\newcommand{\ra}{\rightarrow}

\renewcommand{\phi}{\varphi}

\begin{document}

\title[Periodic derivations]{Periodic derivations and prederivations of Lie algebras}

\author[D. Burde]{Dietrich Burde}
\author[W. Moens]{Wolfgang Alexander Moens}
\address{Fakult\"at f\"ur Mathematik\\
Universit\"at Wien\\
  Nordbergstrasse 15\\
  1090 Wien \\
  Austria}
\email{dietrich.burde@univie.ac.at}
\address{Fakult\"at f\"ur Mathematik\\
Universit\"at Wien\\
  Nordbergstrasse 15\\
  1090 Wien \\
  Austria}
\email{wolfgang.moens@univie.ac.at}
\date{\today}

\subjclass[2000]{Primary 17B10, 17B25}
\thanks{The authors were supported by the FWF, Projekt P21683.}

\begin{abstract}
We consider finite-dimensional complex Lie algebras admitting a periodic
derivation, i.e., a nonsingular derivation which has finite multiplicative order.
We show that such Lie algebras are at most two-step nilpotent and give
several characterizations, such as the existence of gradings by sixth roots of unity, or
the existence of a nonsingular derivation whose inverse is again a derivation. 
We also obtain results on the existence of periodic prederivations. In this context
we study a generalization of Engel-$4$-Lie algebras.
\end{abstract}

\maketitle

\section{Introduction}

Let $\Lg$ be a Lie algebra over a field $k$. A derivation of $\Lg$ is called
{\it nonsingular}, if it is injective as a linear map. Lie algebras admitting 
nonsingular derivations have been studied in many different contexts.
First, they play an important role in the existence question of left-invariant 
affine structures on Lie groups. Here nonsingular derivations arise as a special case 
of invertible $1$-cocylces for the Lie algebra cohomology with coefficients in $\Lg$-modules $M$ 
with $\dim(M)=\dim (\Lg)$. If $D$ is a nonsingular derivation, then the formula
$x\cdot y=D^{-1}([x,D(y)])$ defines a left-symmetric structure on $\Lg$.
For a survey on this topic see \cite{BU24} and the references therein. 
An important structure result for Lie algebras $\Lg$ of characteristic zero with a nonsingular 
derivation has been given by Jacobson \cite{JAC}. It says that such Lie algebras must be nilpotent. 
For Lie algebras in prime characteristic the situation is more complicated.
In that case there exist non-nilpotent Lie algebras, even simple ones, which admit
nonsingular derivations \cite{BKK}. This is very interesting for the coclass theory of pro-$p$ groups
and Lie algebras, see \cite{SHA} and the references given therein. In this context also the
orders of nonsingular derivations have been studied. This leads naturally to a subclass 
of nonsingular derivations, given by {\it periodic} derivations, where
the derivation has finite multiplicative order. Again it is interesting to obtain structure
results on Lie algebras admitting periodic derivations.
Whereas this has been studied intensively in prime characteristic, there seems to be
only one result for the characteristic zero case, which is proved \cite{KK}. 
It says that such Lie algebras are abelian, if the order of the periodic derivation is {\it not} 
a multiple of six. There is nothing said in \cite{KK} on Lie algebras admitting a periodic derivation of 
order which {\it is} a multiple of six. The aim of this paper is to close this gap by characterizing 
such Lie algebras. We first prove that such Lie algebras are abelian or two-step nilpotent. 
Then we show that the existence of a periodic derivation is equivalent to the existence
of a so called {\it hexagonal grading} by sixth roots of unity. This again is equivalent to the
existence of a nonsingular derivation whose inverse is again a derivation.
As it turns out, not all two-step nilpotent Lie algebras admit a periodic derivation. 
This leads us to consider further invariants of two-step nilpotent Lie algebras admitting
periodic derivations. \\
In the last section, we study the much more difficult case of periodic prederivations.
A Lie algebra admitting a periodic prederivation is again nilpotent. However, it is not clear
how to find a good bound on the nilpotency class. In many cases we can prove that such Lie algebras
are at most $4$-step nilpotent. This involves Engel-$4$-Lie algebras and 
so called pre-Engel-$4$ Lie algebras. On the other hand, however, we also construct a $5$-step
nilpotent Lie algebra admitting a periodic prederivation.

%%%%%%%%%%%%%%%%%%%%%%%%%%%%%%%%%%%%%%%%%%%%%%%%%%%%%%%%%%%%%%%%%%%%%%%%%%%%%%

\section{Periodic derivations}

Let $\Lg$ be a Lie algebra. We always assume that $\Lg$ is finite-dimensional and complex, 
if not mentioned otherwise. Denote by $\Der(\Lg)$ the Lie algebra of derivations of $\Lg$.
%We recall the following well known result.
%\begin{lem}\label{eigen}
%Let $\Lg$ be a Lie algebra and $D\in \Der(\Lg)$. If $\Lg=\oplus_{\al}\Lg_{\al}$ denotes
%the generalized eigenspace decomposition of $\Lg$ with respect to $D$, then
%$[\Lg_{\al},\Lg_{\be}]\subseteq \Lg_{\al+\be}$ for all $\al,\be\in \C$.
%\end{lem}
Periodic derivations have been defined in \cite{KK}.

\begin{defi}
Let $\Lg$ be a Lie algebra. A derivation $D$ of $\Lg$ is called {\it periodic},
if there is an integer $m\ge 1$ such that $D^m=\id$.
\end{defi}

If $\Lg$ has a periodic derivation $D$ with $m=1$, then we have $[x,y]=D([x,y])=
[D(x),y]+[x,D(y)]=2[x,y]$, so that $\Lg$ is abelian. 
Conversely, an abelian Lie algebra has periodic derivations of any possible order $m\ge 1$.
Indeed, just define $D=\zeta_m\id \in \End(\Lg)=\Der(\Lg)$, where $\zeta_m$ is a 
primitive $m$-th root of unity. \\[0.2cm]
Our aim is to characterize complex Lie algebras admitting a periodic derivation. 
We need an elementary lemma related to roots of unity.

\begin{lem}\label{omega}
Let $\al,\be$ be complex numbers with $\abs{\al}=\abs{\be}=\abs{\al+\be}=1$.
Then $\be=\om \al$ with a primitive third root of unity $\om$.
\end{lem}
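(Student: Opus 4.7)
The plan is to reduce the statement to a claim about a single complex number on the unit circle, and then compute its real part directly.

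First, I would set $\omega := \beta/\alpha$. Since $|\alpha| = |\beta| = 1$, this is a well-defined unimodular complex number, and the conclusion $\beta = \omega \alpha$ is tautological once we know the value of $\omega$. The condition $|\alpha + \beta| = 1$ translates, after dividing through by $|\alpha| = 1$, into $|1 + \omega| = 1$. So the whole lemma reduces to showing: if $\omega \in \C$ satisfies $|\omega| = |1 + \omega| = 1$, then $\omega$ is a primitive third root of unity.

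For this, I would expand the squared modulus using $\overline{\omega}\omega = 1$:
\begin{equation*}
1 = |1 + \omega|^2 = (1 + \omega)(1 + \overline{\omega}) = 1 + \omega + \overline{\omega} + |\omega|^2 = 2 + 2\,\mathrm{Re}(\omega).
\end{equation*}
Hence $\mathrm{Re}(\omega) = -\tfrac{1}{2}$. Combined with $|\omega| = 1$, this forces $\omega = -\tfrac{1}{2} \pm \tfrac{\sqrt{3}}{2}\, i$, which are exactly the two primitive third roots of unity, i.e., the roots of $X^2 + X + 1$.

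There is no real obstacle here; the only thing to be careful about is ruling out $\omega = 1$ (which would make $|1 + \omega| = 2 \neq 1$), so that the cube root we obtain is genuinely primitive. The computation above does that automatically because $\mathrm{Re}(\omega) = -1/2$ excludes $\omega = 1$.
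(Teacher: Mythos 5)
Your proof is correct and follows essentially the same route as the paper's: both normalize by $\al$ and deduce that the ratio $\om=\be/\al$ has real part $-\tfrac12$, hence is a primitive third root of unity. Your squared-modulus expansion $|1+\om|^2=2+2\,\mathrm{Re}(\om)$ is a slightly cleaner way to reach that conclusion than the paper's trigonometric parametrization $1+e^{i\phi}+e^{i\psi}=0$, but the underlying idea is identical.
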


\begin{proof}
The points $0$, $\al$ and $\al+\be$ are the vertices of an equilateral triangle.
Hence $\be=\om \al$. To see this algebraically, let $\ga=-(\al+\be)$. Then $\al+\be+\ga=0$ and 
$\abs{\al}=\abs{\be}=\abs{\ga}=1$. We may write $\be=e^{i\phi}\al$ and $\ga=e^{i\psi}\al$.
Substituting in  $\al+\be+\ga=0$ one obtains $1+e^{i\phi}+e^{i\psi}=0$. Equating real and imaginary 
parts yields
%\begin{align*}
%\cos(\phi)+\cos(\psi) +1 & = 0,\\
%\sin (\phi)+\sin(\psi) & = 0.
%\end{align*}
$\psi=-\phi$ and $\cos (\phi)=-1/2$. We may take $\phi$ to be positive, 
so that $\phi=2\pi/3$ and $\psi=-2\pi/3$. Then $\om=e^{i\phi}=e^{2\pi i/3}$ 
and  $\be=\om \al$.
\end{proof}

\begin{cor}\label{roots1}
Let $\al,\be,\ga \in \C$. Then at least one of the numbers $\al,\be,\ga,\al+\ga,\be+\ga,
\al+\be+\ga$ is not an $m$-th root of unity.
\end{cor}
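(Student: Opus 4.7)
The plan is to argue by contradiction: assume that every one of the six numbers $\alpha, \beta, \gamma, \alpha+\gamma, \beta+\gamma, \alpha+\beta+\gamma$ is an $m$-th root of unity, so in particular each is nonzero and has modulus $1$. I then aim to derive a contradiction from Lemma \ref{omega}.

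I would apply Lemma \ref{omega} twice. First, to the pair $(\alpha, \gamma)$: since $\abs{\alpha} = \abs{\gamma} = \abs{\alpha+\gamma} = 1$, one concludes $\gamma = \omega \alpha$ for some primitive third root of unity $\omega$. Second, to the pair $(\beta, \gamma)$: since $\abs{\beta} = \abs{\gamma} = \abs{\beta+\gamma} = 1$, one concludes $\gamma = \omega' \beta$ for some primitive third root of unity $\omega'$. Combining the two gives $\beta = (\omega/\omega')\alpha$, and therefore
\[
\alpha + \beta + \gamma \;=\; \bigl(1 + \omega/\omega' + \omega\bigr)\,\alpha.
\]

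Since $\abs{\alpha} = 1$, it remains to show that $\abs{1 + \omega/\omega' + \omega}$ cannot equal $1$. There are only two cases. If $\omega = \omega'$, the factor is $2 + \omega$, whose squared modulus equals $(2 - 1/2)^{2} + 3/4 = 3$. If $\omega \ne \omega'$, then $\omega'$ is the other primitive cube root of unity, so $\omega/\omega' = \omega^{2}$, and the factor collapses to $1 + \omega^{2} + \omega = 0$. In either case $\abs{\alpha+\beta+\gamma} \in \{0,\sqrt{3}\}$, contradicting the assumption that $\alpha+\beta+\gamma$ is a root of unity.

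The whole argument is short, and there is no real obstacle; the only point requiring thought is to notice that among the six given quantities the hypotheses are tailored precisely so that Lemma \ref{omega} may be applied to the two triples $\{\alpha, \gamma, \alpha+\gamma\}$ and $\{\beta, \gamma, \beta+\gamma\}$, after which the constraint on $\abs{\alpha+\beta+\gamma}$ forces the contradiction.
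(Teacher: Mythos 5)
Your proof is correct and follows essentially the same route as the paper: both arguments reduce to Lemma~\ref{omega} applied to pairs among the six numbers and then extract a numerical contradiction. The only cosmetic difference is that the paper applies the lemma a third time (to $\al$ and $\be+\ga$) and cubes the resulting identity $\om^r(1+\om^s)=\om^t$ to get $-1=1$, whereas you stop after two applications and compute $\abs{\al+\be+\ga}$ directly, finding it equals $0$ or $\sqrt{3}$; both are valid.
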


\begin{proof}
Assume that all of these numbers are $m$-th roots of unity. Then by lemma $\ref{omega}$ 
we have $\al=\om^r(\be+\ga)=\om^t$ and $\be=\om^s\ga$ for some $r,s,t =\pm 1$. 
This implies $\om^r(1+\om^s)=\om^t$. Raising this to the third power yields
$-1=(1+\om^s)^3=1$, a contradiction.
\end{proof}

\begin{cor}\label{roots2}
Let $\al_1, \ldots , \al_4$ be $m$-th roots of unity. Then at least one of the numbers 
$\al_i+\al_j$ for $i\neq j$ is not an $m$-th root of unity.
\end{cor}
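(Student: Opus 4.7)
The plan is to argue by contradiction and reduce the claim to Lemma \ref{omega} together with a pigeonhole argument. Suppose that all six sums $\alpha_i+\alpha_j$ with $i\neq j$ are $m$-th roots of unity. Since roots of unity are nonzero and have absolute value one, each pair $\alpha_i,\alpha_j$ satisfies the hypotheses $\abs{\alpha_i}=\abs{\alpha_j}=\abs{\alpha_i+\alpha_j}=1$ of Lemma \ref{omega}.

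By Lemma \ref{omega}, for every pair $i\neq j$ there exists a primitive third root of unity $\omega_{ij}\in\{\omega,\omega^2\}$, where $\omega=e^{2\pi i/3}$, such that $\alpha_j=\omega_{ij}\alpha_i$. In particular, the three ratios
\[
\frac{\alpha_2}{\alpha_1},\quad \frac{\alpha_3}{\alpha_1},\quad \frac{\alpha_4}{\alpha_1}
\]
all lie in the two-element set $\{\omega,\omega^2\}$.

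By the pigeonhole principle, two of these ratios coincide, say $\alpha_j/\alpha_1=\alpha_k/\alpha_1$ for some distinct $j,k\in\{2,3,4\}$. Then $\alpha_j=\alpha_k$, so $\alpha_j+\alpha_k=2\alpha_j$ has absolute value $2$, which is impossible for an $m$-th root of unity. This contradicts the assumption and completes the argument.

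I do not anticipate a serious obstacle here: once Lemma \ref{omega} has been applied to each pair $\{\alpha_i,\alpha_j\}$, the forbidden configuration follows from a one-line counting argument. The only point to be careful about is that $\alpha_i+\alpha_j$, being an $m$-th root of unity, is automatically nonzero and of modulus one, so the hypotheses of Lemma \ref{omega} are indeed satisfied for every pair.
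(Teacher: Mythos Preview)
Your proof is correct and follows essentially the same approach as the paper: assume all six sums are $m$-th roots of unity, apply Lemma~\ref{omega} to write $\alpha_2,\alpha_3,\alpha_4$ as $\alpha_1$ times an element of $\{\omega,\omega^2\}$, and then use pigeonhole to force two of them equal, contradicting $\abs{\alpha_j+\alpha_k}=1$. The only cosmetic difference is that the paper records the exponents as $r,s,t\in\{1,-1\}$ rather than working with the ratios directly.
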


\begin{proof}
Assume that all of these numbers are $m$-th roots of unity. Then by lemma $\ref{omega}$ 
we have $\al_2=\om^r \al_1$,  $\al_3=\om^s \al_1$ and $\al_4=\om^t \al_1$ for
some $r,s,t \in \{1,-1\}$. Now two of these exponents must coincide, say $r=s$. 
This yields $1=\abs{\om^r+\om^s}=\abs{2\om^r}=2$, a contradiction.
\end{proof}

The only result on periodic derivations in characteristic zero we could find is the
following proposition of \cite{KK}.

\begin{prop}\label{multiple}
Let $\Lg$ be a nonabelian Lie algebra of characteristic zero, which admits a periodic 
derivation of order $m$. Then $m$ is a multiple of six.
\end{prop}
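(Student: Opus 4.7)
The plan is to use the periodicity of $D$ to diagonalize it and turn $\Lg$ into a graded Lie algebra whose grading set consists of $m$-th roots of unity; then the hypothesis of nonabelianness forces the grading to interact nontrivially with addition, at which point Lemma~\ref{omega} pins down enough structure to extract the divisibility conditions $2\mid m$ and $3\mid m$.

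First, I would observe that since $D^m=\id$, the minimal polynomial of $D$ divides $x^m-1$, which has distinct roots in characteristic zero. Hence $D$ is diagonalizable, and its eigenvalues are $m$-th roots of unity. Writing $\Lg_\al=\{x\in\Lg:D(x)=\al x\}$, we obtain the eigenspace decomposition
\[
 \Lg=\bigoplus_{\al}\Lg_\al,
\]
and the Leibniz rule $D([x,y])=[Dx,y]+[x,Dy]$ gives $[\Lg_\al,\Lg_\be]\subseteq\Lg_{\al+\be}$ for all eigenvalues $\al,\be$.

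Next I would use the assumption that $\Lg$ is nonabelian. Then there exist eigenvalues $\al,\be$ of $D$ with $[\Lg_\al,\Lg_\be]\neq 0$, and hence $\al+\be$ is again an eigenvalue of $D$, so $\abs{\al}=\abs{\be}=\abs{\al+\be}=1$. Lemma~\ref{omega} then yields $\be=\om\al$ for a primitive third root of unity $\om$.

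Finally I would extract divisibility from this. The set $\mu_m$ of $m$-th roots of unity is a multiplicative group, so $\om=\be\al^{-1}\in\mu_m$; since $\om$ has order $3$, this forces $3\mid m$. Using $1+\om=-\om^{2}$, we also get $\al+\be=-\om^{2}\al$, and both $\al+\be$ and $\om^{2}\al$ lie in $\mu_m$, so their ratio $-1$ lies in $\mu_m$, forcing $2\mid m$. Combining, $6\mid m$.

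There is no real obstacle here: the whole argument is a two-line consequence of Lemma~\ref{omega} once one realizes that a periodic derivation in characteristic zero automatically induces a multiplicative $\mu_m$-grading via its eigenspaces, and that nonabelianness means the grading is not supported on a single element of $\mu_m$. The only mildly nonobvious step is noticing that $-1$ must itself be an $m$-th root of unity, which is what supplies the factor of $2$ beyond the factor of $3$ coming from~$\om$.
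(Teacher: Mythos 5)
Your proposal is correct and follows essentially the same route as the paper: diagonalize the finite-order derivation, use nonabelianness to produce eigenvalues $\al,\be,\al+\be$ that are all $m$-th roots of unity, and invoke Lemma~\ref{omega} to get $\be=\om\al$. The only cosmetic difference is at the end, where the paper observes directly that $1+\om$ is a primitive sixth root of unity with $(1+\om)^m=1$, whereas you extract $3\mid m$ from $\om\in\mu_m$ and $2\mid m$ from $-1\in\mu_m$ separately; these are equivalent.
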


\begin{proof}
There is a proof given in \cite{KK} using a binomial formula and determinants, which works for
arbitrary characteristic. For the complex numbers there is a shorter
argument available. Let $D$ be a periodic derivation of order $m$. Since any complex
endomorphism of finite order is semisimple, $D$ is semisimple. Since $\Lg$ is nonabelian 
there exist eigenvectors $u$ and $v$ with eigenvalues
$\al$ and $\be$ such that $[u,v]$ is a nonzero eigenvector with eigenvalue $\al+\be$.
Indeed, $D([u,v])=[D(u),v]+[u,D(v)]=(\al+\be)[u,v]$.
This means $\al^m=\be^m=(\al+\be)^m=1$, so that $\be=\al \om$ with a primitive third root of 
unity $\om$, by lemma $\ref{omega}$. Then we have $\al+\be=\al(1+\om)$. Raising this to the
$m$-th power we obtain $(1+\om)^m=1$, with $1+\om$ being a primitive sixth root of unity, 
so that $6\mid m$.
\end{proof}

The result can be reformulated as a structure result for $\Lg$. A Lie algebra admitting
a periodic derivation of order $m$ coprime to six is abelian. Jacobson has proved \cite{JAC},
that a Lie algebra of characteristic zero admitting a nonsingular derivation must be
nilpotent. Since a periodic derivation is nonsingular as a linear transformation, we 
obtain the following structure result.

\begin{prop}
Let $\Lg$ be a Lie algebra admitting a periodic derivation. Then $\Lg$ is nilpotent.
\end{prop}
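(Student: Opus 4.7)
The plan is to reduce this immediately to Jacobson's classical theorem, which is cited just before the statement: any Lie algebra over a field of characteristic zero admitting a nonsingular derivation is nilpotent. Since the standing assumption of the paper is that $\Lg$ is finite-dimensional and complex, Jacobson's theorem applies directly once we verify that a periodic derivation is nonsingular.

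So the only thing to check is that a periodic derivation $D$, meaning one with $D^m = \id$ for some integer $m \geq 1$, is injective as a linear endomorphism of $\Lg$. This is immediate: the identity $D \circ D^{m-1} = D^{m-1}\circ D = \id$ exhibits $D^{m-1}$ as a two-sided linear inverse for $D$, so $D$ is in particular injective. Equivalently, every eigenvalue of $D$ is an $m$-th root of unity, hence nonzero, so $0$ is not an eigenvalue and $\ker D = 0$.

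Combining the two observations, $D$ is a nonsingular derivation of the finite-dimensional complex Lie algebra $\Lg$, and Jacobson's theorem yields the nilpotency of $\Lg$. There is essentially no obstacle here; the only point worth flagging is that the argument genuinely uses characteristic zero, since in prime characteristic there exist non-nilpotent (even simple) Lie algebras with nonsingular derivations, as mentioned in the introduction via the reference \cite{BKK}.
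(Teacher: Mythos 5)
Your proposal is correct and follows exactly the paper's argument: the paper also derives this proposition by observing that a periodic derivation is nonsingular as a linear transformation and then invoking Jacobson's theorem for characteristic zero. Your explicit verification that $D^{m-1}$ is a two-sided inverse of $D$ just makes the nonsingularity step fully explicit.
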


In characteristic $p>0$ this need not be true. There are non-nilpotent Lie algebras, even simple 
Lie algebras, which have periodic derivations, see \cite{SHA}. 
For complex Lie algebras we obtain a stronger result than the above proposition.

\begin{prop}\label{2-step}
Let $\Lg$ be a Lie algebra admitting a periodic derivation. Then
$\Lg$ is at most two-step nilpotent.
\end{prop}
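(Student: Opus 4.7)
My plan is to exploit the eigenspace decomposition of the periodic derivation and then invoke Corollary~\ref{roots1}. Since $D^m = \id$ and $\Lg$ is finite-dimensional over $\C$, the derivation $D$ is semisimple, so we have $\Lg = \bigoplus_{\lambda \in \mathcal{E}} \Lg_\lambda$ where $\mathcal{E}$ is the set of eigenvalues of $D$, each an $m$-th root of unity. The derivation identity yields $[\Lg_\alpha, \Lg_\beta] \subseteq \Lg_{\alpha + \beta}$, so whenever two eigenvectors with eigenvalues $\alpha, \beta$ have a nonzero bracket, the sum $\alpha + \beta$ lies in $\mathcal{E}$ and is therefore also an $m$-th root of unity.

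I would argue by contradiction, assuming $[\Lg, [\Lg, \Lg]] \neq 0$. Decomposing this space with respect to the eigenspaces of $D$, one can pick eigenvectors $u \in \Lg_\alpha$, $v \in \Lg_\beta$, $w \in \Lg_\gamma$ with $[u, [v, w]] \neq 0$. This at once forces $\beta + \gamma \in \mathcal{E}$ (since $[v,w] \neq 0$) and $\alpha + \beta + \gamma \in \mathcal{E}$, so both of these are $m$-th roots of unity.

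Next, the Jacobi identity $[u,[v,w]] = [[u,v],w] + [v,[u,w]]$ guarantees that at least one of the two summands on the right is nonzero. If $[v,[u,w]] \neq 0$, then $[u,w] \neq 0$, hence $\alpha + \gamma$ is an $m$-th root of unity; combined with the previously established sums, the six numbers $\alpha, \beta, \gamma, \alpha+\gamma, \beta+\gamma, \alpha+\beta+\gamma$ are all $m$-th roots of unity, directly contradicting Corollary~\ref{roots1}. If instead $[[u,v], w] \neq 0$, then $\alpha + \beta \in \mathcal{E}$, and after interchanging the names of $\beta$ and $\gamma$, Corollary~\ref{roots1} again yields a contradiction.

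The essential content is the reduction from the nilpotency-class bound to the arithmetic statement about roots of unity already packaged in Corollary~\ref{roots1}; the main obstacle is only the bookkeeping in the Jacobi case split, and it is handled by the symmetry of the triple $(\alpha, \beta, \gamma)$. Once the eigenspace decomposition and the derivation-compatible grading $[\Lg_\alpha, \Lg_\beta] \subseteq \Lg_{\alpha + \beta}$ are in place, the rest of the argument is immediate.
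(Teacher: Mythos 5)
Your proof is correct and follows essentially the same route as the paper: semisimplicity of $D$, the eigenspace grading $[\Lg_\alpha,\Lg_\beta]\subseteq\Lg_{\alpha+\beta}$, and a reduction to Corollary~\ref{roots1} via the Jacobi identity. The only difference is that you spell out the Jacobi case split and the relabeling of $\beta$ and $\gamma$ explicitly, where the paper compresses this into ``we may assume that also $[y,[x,z]]\neq 0$''; both handle it correctly.
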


\begin{proof}
Let $D$ be a periodic derivation of $\Lg$. Hence it is semisimple.
Assume that $[\Lg,[\Lg,\Lg]]\neq 0$. Then there exist eigenvectors $x,y,z$ of $D$ 
with eigenvalues $\al,\be,\ga$ such that $[x,[y,z]]\neq 0$. Hence $[y,z]\neq 0$. By
the Jacobi identity we may assume that also $[y,[x,z]]\neq 0$, hence $[x,z]\neq 0$. 
We conclude that $\al,\be,\ga,\al+\ga,\be+\ga,\al+\be+\ga$ are all $m$-th roots 
of unity. This contradicts corollary $\ref{roots1}$.
\end{proof}

\begin{prop}\label{inverse}
Let $\Lg$ be a Lie algebra admitting a periodic derivation $D$. Then
the inverse $D^{-1}$ is again a derivation of $\Lg$.
\end{prop}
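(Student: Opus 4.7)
The plan is to use the eigenspace decomposition of $\Lg$ under $D$ to reduce the Leibniz rule for $D^{-1}$ to a pointwise scalar identity on eigenvalues, which can then be dispatched by Lemma~\ref{omega}. Because $D^m=\id$, the derivation $D$ is semisimple over $\C$, so first I would write $\Lg=\bigoplus_\al \Lg_\al$ as a direct sum of $D$-eigenspaces, with all eigenvalues $\al$ nonzero $m$-th roots of unity. The operator $D^{-1}$ then acts on $\Lg_\al$ as multiplication by $\al^{-1}$, and since $D$ is a derivation one has $[\Lg_\al,\Lg_\be] \subseteq \Lg_{\al+\be}$.

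By bilinearity of the bracket and linearity of $D^{-1}$, it suffices to verify the Leibniz rule for $D^{-1}$ on pairs of eigenvectors $x \in \Lg_\al$, $y \in \Lg_\be$. For such a pair the right-hand side of the Leibniz rule is
\[
[D^{-1}(x),y]+[x,D^{-1}(y)] = (\al^{-1}+\be^{-1})[x,y] = \frac{\al+\be}{\al\be}\,[x,y],
\]
while, since $[x,y] \in \Lg_{\al+\be}$, the left-hand side equals $D^{-1}([x,y]) = (\al+\be)^{-1}[x,y]$ when $[x,y]\neq 0$ (in which case $\al+\be$ is a nonzero $m$-th root of unity), and simply $0$ otherwise. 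The case $[x,y]=0$ is immediate, so the problem reduces to the scalar identity $(\al+\be)^{-1}=\frac{\al+\be}{\al\be}$, equivalently $\al^2+\al\be+\be^2=0$, for each pair $(\al,\be)$ with $[\Lg_\al,\Lg_\be]\neq 0$.

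The only real obstacle is this scalar identity, and it is exactly here that the periodicity hypothesis enters. Whenever $[\Lg_\al,\Lg_\be]\neq 0$, all three numbers $\al,\be,\al+\be$ are $m$-th roots of unity, and in particular have modulus $1$; so Lemma~\ref{omega} applies and gives $\be=\om\al$ for a primitive third root of unity $\om$. The identity then collapses at once to $\al^2(1+\om+\om^2)=0$, which holds. This would complete the verification that $D^{-1}\in\Der(\Lg)$.
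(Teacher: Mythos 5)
Your proof is correct and follows essentially the same route as the paper: both use semisimplicity of $D$ to reduce the Leibniz rule for $D^{-1}$ to a scalar identity on pairs of eigenvalues with $[\Lg_\al,\Lg_\be]\neq 0$, and both invoke Lemma~\ref{omega} to write $\be=\om\al$ and verify that identity (the paper checks $(1+\om^{-1})=(1+\om)^{-1}$ directly, while you equivalently reduce to $\al^2+\al\be+\be^2=0$). Your write-up is a bit more explicit about the bilinearity reduction and the trivial case $[x,y]=0$, but the substance is identical.
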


\begin{proof}
We know that $D$ is semisimple. Let $e_1,\ldots ,e_n$ be a basis of eigenvectors
with eigenvalues $\al_1,\ldots ,\al_n$. As in the proof of proposition $\ref{multiple}$,
we see that for two noncommuting eigenvectors $e_i$ and $e_j$ with eigenvalues
$\al_i$ and $\al_j$ we have $\al_i=\al_j\om$.
Then $D^{-1}([e_i,e_j])=[D^{-1}(e_i),e_j]+[e_i,D^{-1}(e_j)]$ follows from
\[
\al_i^{-1}+\al_j^{-1} =\al_j^{-1}(1+\om^{-1})=\al_j^{-1}(1+\om)^{-1} = (\al_i+\al_j)^{-1}.
\]
\end{proof}

If a decomposable Lie algebra $\Lg$ admits a periodic derivation of order a multiple of six, 
say $12$, then we may not obtain a periodic derivation of order six just by rescaling.
Indeed, for $\Lg=\C^2$ the linear map $D=\diag (\zeta,\zeta^2)$, with $\zeta$ being a primitive 
$12$-th root of unity, is a periodic derivation of order $12$. However, no multiple $\la D$ has 
order six: $1=(\la\zeta^2)^6=(\la\zeta)^6$ yields $1=\zeta^6=-1$, which is a contradiction.

\begin{defi}
Denote by $N(c,g)$ the free-nilpotent Lie algebra of nilpotency class $c$ and $g$ generators.
\end{defi}

Let $c=2$. The Lie algebra $N(2,1)$ is abelian, and $N(2,2)$ is the Heisenberg Lie algebra,
with $[x_1,x_2]=x_3$. It admits periodic derivations, e.g., $D=\diag (1,\om,1+\om)$ or 
\[
D=\begin{pmatrix} 0 & -1 & 0 \\ 1 & 1 & 0 \\ 0 & 0 & 1 \end{pmatrix}. 
\]
The latter derivation is integral, based on an element of order six in $SL_2(\Z)$. 
This example generalizes to all Heisenberg Lie algebras $\Lh_m$ of dimension $2m+1$, with
Lie brackets $[x_i,y_i]=z$ for $1\le i\le m$. Indeed,  $D=\diag (1,\ldots ,1,\om,\ldots ,\om,1+\om)$
is a periodic derivation of $\Lh_m$. \\
The Lie algebra $N(2,3)$, with basis $x_1,\ldots ,x_6$ and brackets
\[
[x_1,x_2]=x_4, \; [x_1,x_3]=x_5, \; [x_2,x_3]=x_6
\]
is our prototype of a Lie algebra admitting a periodic derivation.

\begin{ex}\label{st}
The Lie algebra $N(2,3)$ has a periodic derivation of order six, given by
\[
D=\diag (1,\om,\om^2,1+\om,1+\om^2,\om+\om^2).
\]
\end{ex}

It is easy to see that $D^{-1}=\diag (1,\om^2,\om,1+\om^2,1+\om,\om+\om^2)$. This is
again a derivation. 

In low dimensions every two-step nilpotent Lie algebra admits a periodic derivation.
This can be seen by explicit calculations, using a list of complex two-step nilpotent Lie
algebras. It also follows more easily from the results of section $4$.

\begin{prop}
Let $\Lg$ be a two-step nilpotent Lie algebra of dimension $n\le 6$. Then $\Lg$
admits a periodic derivation.
\end{prop}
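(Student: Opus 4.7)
The plan is to verify the claim by running through the classification of 2-step nilpotent complex Lie algebras of dimension at most six and exhibiting a periodic derivation in each case. Two general observations reduce the work substantially. First, the existence of a periodic derivation is preserved under direct sums: if $D_i$ is a periodic derivation of $\Lg_i$ of order $m_i$ for $i=1,2$, then $D_1 \oplus D_2$ is a periodic derivation of $\Lg_1 \oplus \Lg_2$ of order $\mathrm{lcm}(m_1,m_2)$. In particular, one can freely adjoin abelian summands equipped with $\om\cdot\id$ to any example. Second, by Example~\ref{st} and the explicit derivations written out above for the Heisenberg algebras $\Lh_m$, the Lie algebras $\Lh_1$, $\Lh_2$, $N(2,3)$ together with all of their direct sums with $\C^k$ already admit periodic derivations of order six.

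After these reductions only a short list of indecomposable 2-step nilpotent examples of dimension six remains, and I would handle them uniformly as follows. Fix a vector-space complement $V$ to $[\Lg,\Lg]$ in $\Lg$ and look for a decomposition $V = V_1 \oplus V_\om \oplus V_{\om^2}$ with two properties: \textbf{(i)} $[V_\la, V_\la] = 0$ in $[\Lg,\Lg]$ for every $\la \in \{1,\om,\om^2\}$; and \textbf{(ii)} the three subspaces $[V_\la, V_\mu] \subseteq [\Lg,\Lg]$ for $\la \neq \mu$ sit in direct-sum position. Given such a decomposition, define $D$ to act as $\la$ on $V_\la$ and as $\la+\mu$ (a primitive sixth root of unity, by Lemma~\ref{omega}) on $[V_\la,V_\mu]$. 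Property (i) ensures that $[\Lg,\Lg]=[V,V]$ reduces to the sum of cross-brackets, property (ii) turns this into a direct sum and so makes $D$ well-defined on $[\Lg,\Lg]$, and the Leibniz rule holds since $D$ is diagonal in a basis of eigenvectors. All eigenvalues of $D$ are sixth roots of unity, hence $D^6=\id$ and $D$ is periodic and nonsingular.

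The main obstacle is therefore to show that such a decomposition exists in every remaining indecomposable case. In dimensions up to six the constraints $\dim V \le 4$ and $\dim[\Lg,\Lg] \le 3$ leave very little room for obstructions, and one can always ``three-color'' a basis of $V$ so as to meet both (i) and (ii); the verification splits into the few subcases indexed by the pair $(\dim V, \dim[\Lg,\Lg])$, each of which is settled by inspecting the alternating commutator form $V\times V\to [\Lg,\Lg]$. This is what the authors refer to as ``explicit calculation using a list.'' A conceptually cleaner, classification-free argument, which would be my preferred presentation, is expected to fall out of the structural results of Section~4 once the equivalence between periodic derivations and hexagonal gradings has been established.
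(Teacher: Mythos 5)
Your proposal is correct and takes essentially the same route as the paper, which offers no written proof at all: it states only that the result follows ``by explicit calculations, using a list'' or from the results of Section~4, and your three-coloring $V=V_1\oplus V_\om\oplus V_{\om^2}$ of a complement to $[\Lg,\Lg]$ is exactly the hexagonal grading / homogeneously partitioning ideal construction that Section~4 formalizes. The one step you leave as an assertion --- that the coloring exists in each subcase indexed by $(\dim V,\dim[\Lg,\Lg])$ --- is precisely the finite check the paper also delegates to the reader (and is indeed easy here, since $\dim[\Lg,\Lg]\le 2$ or $\Lg$ is a quotient of $N(2,3)$ in every case with $n\le 6$).
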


\begin{cor}
Let $\Lg$ be a two-step nilpotent Lie algebra with $g\le 3$ generators, i.e. with
first Betti number $b_1\le 3$. Then $\Lg$ admits a periodic derivation.
\end{cor}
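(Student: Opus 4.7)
The plan is to reduce the corollary directly to the preceding proposition by bounding the dimension. For a nilpotent Lie algebra $\Lg$, the first Betti number $b_1(\Lg)=\dim H^1(\Lg,\C)=\dim(\Lg/[\Lg,\Lg])$ coincides with the minimal number of generators, so the hypothesis $b_1\le 3$ is really the assertion that $\Lg$ is generated by at most three elements.

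Next I would invoke the universal property of the free nilpotent Lie algebra $N(2,g)$: every two-step nilpotent Lie algebra on $g$ generators is a quotient of $N(2,g)$. In particular its dimension is bounded by
\[
\dim \Lg \;\le\; \dim N(2,g) \;=\; g+\binom{g}{2}.
\]
Plugging in $g\le 3$ gives $\dim\Lg\le 3+3=6$. (For $g=1$ the algebra is abelian, and for $g=2$ it is at most $3$-dimensional, both of which are trivially covered.)

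Since $\Lg$ is two-step nilpotent of dimension at most six, the preceding proposition applies and $\Lg$ admits a periodic derivation. There is no real obstacle here: the only content is the dimension bound $\dim\Lg\le g+\binom{g}{2}$, which follows from $\Lg$ being a quotient of $N(2,g)$, together with the identification of $b_1(\Lg)$ with the generator number for a nilpotent Lie algebra.
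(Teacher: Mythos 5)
Your argument is correct and is essentially the paper's own proof: the paper likewise observes that such an algebra is a quotient of $N(2,3)$, hence of dimension at most $6$, and then appeals to the preceding proposition. Your write-up just makes the dimension count $\dim\Lg\le g+\binom{g}{2}$ and the identification of $b_1$ with the minimal number of generators explicit.
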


\begin{proof}
Such a Lie algebra is a quotient of $N(2,3)$. In the latter 
case it would be at most $6$-dimensional.
\end{proof}

In general, not  every two-step nilpotent Lie algebra admits a periodic derivation.
Already in dimension $7$ there are counter examples. Consider the Lie algebra 
$\Lg=N(2,4)/I_5$ in theorem $7.15$ of \cite{GAU}, with Lie brackets
\[
[x_1,x_2]=x_5, \; [x_1,x_3]=x_6, \; [x_2,x_3]=x_7,\;  [x_3,x_4]=-x_5. 
\]

\begin{ex}\label{2.13}
The two-step nilpotent Lie algebra  $\Lg=N(2,4)/I_5$ of dimension $7$ has no periodic
derivation.
\end{ex}

Indeed, it is easy to see that a derivation of $\Lg$ has eigenvalues of the form
\[
\al,\be,\ga,\be-\al,\al+\ga,\be-\ga,\al+\be-\ga.
\]
These cannot be all $m$-th roots of unity. \\[0.2cm]
In contrast to this, all two-step nilpotent Lie algebras admit a periodic {\it automorphism}
of any possible order $m\ge 1$. See \cite{JAC} for the discussion on periodic automorphisms. \\
More examples of higher dimension without periodic derivations are given by the following
result.

\begin{prop}
The free-nilpotent Lie algebra $N(2,g)$ admits a periodic derivation if and only if
$g\le 3$.
\end{prop}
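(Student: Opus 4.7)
The plan is to split the biconditional into its two directions. The forward direction (existence for $g \le 3$) is essentially already done: $N(2,1) = \C$ is abelian, so any scalar root of unity works; $N(2,2)$ is the Heisenberg Lie algebra $\Lh_1$, whose periodic derivation was exhibited just after Definition of $N(c,g)$; and $N(2,3)$ carries the periodic derivation displayed in Example~\ref{st}. So the work is entirely in the reverse direction: assuming $g \ge 4$, show that $N(2,g)$ does not admit a periodic derivation.

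The strategy for the reverse direction is to leverage the freeness of $N(2,g)$ to reduce the question about derivations purely to a question about eigenvalues of a linear map on the abelianization. Assume, for contradiction, that $D$ is a periodic derivation of $\Lg = N(2,g)$ of order $m$. Then $D$ is semisimple, and $D$ preserves $[\Lg,\Lg]$. Choose a $D$-invariant complement $V$ of $[\Lg,\Lg]$ in $\Lg$ together with a basis $y_1, \ldots, y_g$ of $V$ consisting of $D$-eigenvectors with eigenvalues $\al_1, \ldots, \al_g$. Because $N(2,g)$ is free $2$-step nilpotent, the $\binom{g}{2}$ brackets $[y_i,y_j]$ with $i < j$ form a basis of $[\Lg,\Lg]$, and each is again a $D$-eigenvector with eigenvalue $\al_i+\al_j$. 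Since $D^m=\id$ and $D$ is nonsingular, every $\al_i$ and every $\al_i+\al_j$ is a nonzero $m$-th root of unity.

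To finish, restrict attention to the four eigenvalues $\al_1,\al_2,\al_3,\al_4$ (which exist precisely because $g\ge 4$). All six pairwise sums $\al_i+\al_j$ with $1 \le i < j \le 4$ are then $m$-th roots of unity, which directly contradicts Corollary~\ref{roots2}. This contradiction shows that $N(2,g)$ admits no periodic derivation whenever $g\ge 4$.

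I expect no serious obstacle here: the only points that need mild care are (i) explaining why $D$ acts as a well-defined semisimple transformation on the quotient $\Lg/[\Lg,\Lg]$ and therefore can be diagonalized on a $D$-invariant complement (standard, since semisimple operators preserving a subspace act semisimply on the quotient and the subspace, hence admit invariant complements), and (ii) using freeness to guarantee that the commutator brackets of a basis of such a complement form a basis of $[\Lg,\Lg]$ rather than merely a spanning set. Both are routine, so the heart of the argument is really the clean application of Corollary~\ref{roots2} once the eigenvalue setup is in place.
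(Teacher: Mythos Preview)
Your proposal is correct and follows essentially the same approach as the paper: diagonalize the semisimple periodic derivation on a complement to $[\Lf,\Lf]$, use freeness so that the brackets $[y_i,y_j]$ are nonzero eigenvectors with eigenvalues $\al_i+\al_j$, and then invoke Corollary~\ref{roots2} on any four of the $\al_i$. The only difference is cosmetic---you spell out the existence of a $D$-invariant complement a bit more carefully than the paper does.
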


\begin{proof}
The case $g\le 3$ has been treated above. 
Assume that $g\ge 4$ and that $\Lf=N(2,g)$ admits a periodic derivation $D$. Then $D$
is semisimple and we can find eigenvectors $x_1,\ldots ,x_g$ which span a subspace
complementary to $[\Lf,\Lf]$. The commutator is spanned by the linearly independent
eigenvectors $[x_i,x_j]$ for $1\le i<j\le g$. If the $x_i$ have eigenvalue $\la_i$, then
the $[x_i,x_j]$ have eigenvalue $\la_i+\la_j$, for $i\neq j$. We obtain $m$-rooths of
unity $\la_1,\la_2,\la_3,\la_4$ and $\la_i+\la_j$ for $i\neq j$. This contradicts corollary
$\ref{roots2}$.
\end{proof}

\section{Gradings}

We introduce the following grading, motivated by the periodic derivation
\[
D=\diag(\al,\be,\ga,\al+\be,\al+\ga,\be+\ga)=\diag (1,\om,\om^2,1+\om,1+\om^2,\om+\om^2)
\]
of example $\ref{st}$.

\begin{defi}\label{hex}
A Lie algebra $\Lg$ is called {\it hexagonally graded}, if it admits 
a vector space decomposition
\[
\Lg=\Lg_{\al}\oplus \Lg_{\be}\oplus \Lg_{\ga}\oplus \Lg_{\al+\be}\oplus \Lg_{\al+\ga}\oplus \Lg_{\be+\ga}
\]
with all indices being distinct complex numbers, satisfying
\begin{itemize}
\item[(1)] $[\Lg_i,\Lg_j]\subseteq \Lg_{i+j}$ for all $i,j\in 
\{\al,\be,\ga,\al+\be,\al+\ga,\be+\ga \}$
\item[(2)] $[\Lg_i,\Lg]=0$ for all $i\in \{\al+\be,\al+\ga,\be+\ga \}$.
\end{itemize}
\end{defi}

\begin{rem}
Note that a hexagonally graded Lie algebra $\Lg$ satisfies $[\Lg,[\Lg,\Lg]]=0$.
\end{rem}

The term hexagonally graded can be illustrated as follows. Let $\om=e^{2\pi i/3}$.
With $\al=1$, $\be=\om$ and $\ga=\om^2$ we have 
\[
(\al,\be,\ga,\al+\be,\al+\ga,\be+\ga)=(1,\om,\om^2,1+\om,1+\om^2,\om+\om^2),
\] 
which consists just of all the sixth roots of unity.

\begin{center}
{
\psfrag{1}{$1$}
\psfrag{2}{$1+\om$}
\psfrag{3}{$\om$}
\psfrag{4}{$\om+\om^2$}
\psfrag{5}{$\om^2$}
\psfrag{6}{$1+\om^2$}
\includegraphics{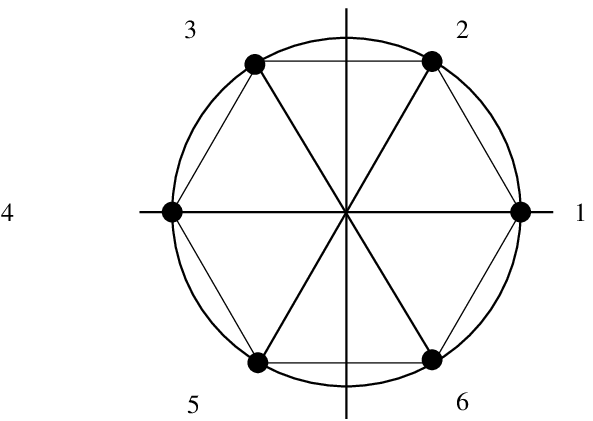}
}
\end{center}

\begin{rem}
We can rewrite the definition in a shorter way.
A Lie algebra $\Lg$ is hexagonally graded, if it admits an additive grading by sixth 
roots of unity, $\Lg=\oplus_{\zeta^6=1}\Lg_{\zeta}$, such that the subspace 
$\oplus_{\zeta^3=1}\Lg_{\zeta}$ corresponding to the third roots of unity is central.
In this formulation,  the third roots form a subgroup. Note that in the above
picture we should rotate the roots to achieve this.
\end{rem}

\begin{lem}\label{3.4}
Let $\Lg$ be hexagonally graded. Then $\Lg$ admits a periodic dervation of order six.
\end{lem}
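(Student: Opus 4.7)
The plan is to construct $D$ explicitly as a scalar on each graded piece, mimicking Example~\ref{st}. With $\om=e^{2\pi i/3}$, I would define $D\in\End(\Lg)$ by
\[
 D|_{\Lg_\al}=\id,\quad D|_{\Lg_\be}=\om\cdot\id,\quad D|_{\Lg_\ga}=\om^2\cdot\id,
\]
\[
 D|_{\Lg_{\al+\be}}=(1+\om)\cdot\id,\quad D|_{\Lg_{\al+\ga}}=(1+\om^2)\cdot\id,\quad D|_{\Lg_{\be+\ga}}=(\om+\om^2)\cdot\id,
\]
and extend linearly. Since the six graded pieces form a direct sum, $D$ is well-defined on $\Lg$.

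First I would verify $D^6=\id$. Using $1+\om=-\om^2$, $1+\om^2=-\om$, and $\om+\om^2=-1$, the six chosen scalars are exactly the group $\{\pm 1,\pm\om,\pm\om^2\}$ of sixth roots of unity, so every eigenvalue $\la$ of $D$ satisfies $\la^6=1$. Hence $D^6=\id$ on every component, and the presence of $-1$ (or of the primitive sixth roots $\pm\om^2$) forces the order to be exactly six.

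It then remains to verify the Leibniz rule $D[x,y]=[Dx,y]+[x,Dy]$ on homogeneous elements. By condition~(2), any bracket involving a vector from $\Lg_{\al+\be}$, $\Lg_{\al+\ga}$, or $\Lg_{\be+\ga}$ vanishes, so the identity is trivially $0=0$ in those cases. For the three essential pairs $(\al,\be),(\al,\ga),(\be,\ga)$, condition~(1) places $[x,y]$ in the graded component of matching index, and the required scalar equalities
\[
 1+\om=1+\om,\quad 1+\om^2=1+\om^2,\quad \om+\om^2=\om+\om^2
\]
hold tautologically by the choice of scalars. The diagonal brackets $[\Lg_i,\Lg_i]$ with $i\in\{\al,\be,\ga\}$ land in $\Lg_{2i}$; in the equivalent sixth-roots-of-unity reformulation of the grading given in the Remark, $2i$ is not a sixth root of unity (it has absolute value $2$), so $\Lg_{2i}$ is not a component of the decomposition and these brackets vanish. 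The only real bookkeeping obstacle in the argument is precisely this treatment of diagonal brackets, which the Remark settles cleanly. Hence $D$ is a periodic derivation of order six, as claimed.
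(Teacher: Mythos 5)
Your construction is exactly the paper's: the paper passes to the sixth-roots-of-unity reformulation of the grading and sets $D(x)=\zeta x$ for $x\in\Lg_{\zeta}$, which is precisely your scalar-by-component map, and your verification of the Leibniz rule (central pieces kill most cases, the grading handles the rest, and $\Lg_{2i}=0$ since $\abs{2i}=2$) is the routine check the paper leaves implicit. The one caveat is your claim that the order is \emph{exactly} six: if $\Lg$ is abelian some of the six components, hence some of the eigenvalues $\pm1,\pm\om,\pm\om^2$, may be missing and $D$ could have order $1$, $2$ or $3$; the paper avoids this by assuming $\Lg$ nonabelian (the abelian case having been settled in Section~2), and you should do the same, noting also that of $\pm\om^2$ only $-\om^2$ is a primitive sixth root.
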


\begin{proof}
Let $\Lg=\oplus_{\zeta^6=1}\Lg_{\zeta}$ be a hexagonal grading. We may assume that $\Lg$
is nonabelian. Define a linear map $D\colon \Lg\ra \Lg$ by its restrictions to the subspaces 
$\Lg_{\zeta}$, i.e., by $D(x)=\zeta x$ for $x\in \Lg_{\zeta}$. This is a periodic 
derivation of $\Lg$ of order six.
\end{proof}

\begin{defi}
A Lie algebra $\Lg$ is called {\it triangularly graded}, if it admits an additive grading by the 
non-zero complex numbers, $\Lg = \bigoplus_{\alpha \in \C^\times} \Lg_\alpha$ with
$[\Lg_{\al},\Lg_{\be}]\subseteq \Lg_{\al+\be}$, such that $[\Lg_\alpha,\Lg_\beta] \neq 0$ 
implies $\be=\om \al$ with a primitive third root of unity $\om$.

\end{defi}

If $\be=\om \al$, then $\al,\be,-(\al+\be)$ lie on a circle and form the vertices of an
equilateral triangle.

\begin{lem}
Let $\Lg$ be a nonabelian triangularly graded Lie algebra. Then $\Lg$ is two-step nilpotent.
\end{lem}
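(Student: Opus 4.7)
The plan is to prove that $[\Lg,[\Lg,\Lg]]=0$ by checking the identity on a homogeneous triple. Pick $x\in\Lg_\al$, $y\in\Lg_\be$, $z\in\Lg_\ga$ and suppose for contradiction that $[x,[y,z]]\neq 0$. The triangular-grading hypothesis applied to $[y,z]\neq 0$ forces $\ga=\om_1\be$ with $\om_1$ a primitive third root of unity, so $[y,z]\in\Lg_{\be+\ga}=\Lg_{\be(1+\om_1)}$. Applying the hypothesis a second time to $[x,[y,z]]\neq 0$ gives $\be+\ga=\om_2\al$ with $\om_2$ another primitive third root of unity, whence $\al=\om_2^{-1}\be(1+\om_1)$.

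Next I would invoke the Jacobi identity $[x,[y,z]]+[y,[z,x]]+[z,[x,y]]=0$: at least one of $[z,[x,y]]$, $[y,[z,x]]$ is nonzero. The two cases are symmetric (swap the roles of $\al$ and $\ga$), so I would treat the case $[z,[x,y]]\neq 0$. In particular $[x,y]\neq 0$, hence $\be=\om_3\al$ with $\om_3$ a primitive third root of unity. Substituting this into the relation $\al=\om_2^{-1}\be(1+\om_1)$ collapses to $1+\om_1=\om_2\om_3^{-1}$.

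The key numerical observation is now that the right-hand side, being a product of primitive third roots of unity, is itself a third root of unity, whereas $1+\om_1=1+e^{\pm 2\pi i/3}=e^{\pm i\pi/3}$ is a primitive sixth root of unity and therefore is \emph{not} a third root of unity. This contradiction shows $[x,[y,z]]=0$ on all homogeneous triples, and by linearity $\Lg$ is two-step nilpotent.

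The main obstacle is purely bookkeeping: keeping the several primitive third roots of unity straight across the Jacobi rearrangement and checking that the symmetric case $[y,[z,x]]\neq 0$ (which uses $\al=\om_3\ga$ and $\ga+\al=\om_2'\be$ in place of the relations above) yields the same forbidden equation $1+\text{(3rd root)}=\text{(3rd root)}$. The computational heart of the argument is identical to the one behind Proposition~\ref{multiple}: adding $1$ to a primitive third root of unity lands squarely in the primitive sixth roots, which are disjoint from the third roots of unity.
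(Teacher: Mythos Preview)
Your argument is correct and follows essentially the same route as the paper. Both proofs assume a nonzero triple bracket on homogeneous elements, use the Jacobi identity to produce a second nonzero bracket among the same three grading indices, and then reach the numerical contradiction $1+\om = (\text{third root of unity})$, equivalently $(1+\om)^3=-1\neq 1$. The paper compresses the last step by pointing back to the proof of Corollary~\ref{roots1} (noting that the triangular-grading hypothesis delivers exactly the relations $\al=\om^t\ga$, $\be=\om^s\ga$, $\al=\om^r(\be+\ga)$ that drive that corollary), whereas you unpack the computation explicitly; the content is the same.
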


\begin{proof}
The proof goes exactly like the one of proposition $\ref{2-step}$.
Suppose $[\Lg,[\Lg,\Lg]] \neq 0$. Then there exist nonzero scalars $\alpha,\beta$ and $\gamma$ 
such that $[\Lg_\alpha,[\Lg_\beta,\Lg_\gamma]] \neq 0$. This implies 
$[\Lg_\beta,\Lg_\gamma] \neq 0$. By the Jacobi identity we may assume that $ [\Lg_\alpha,\Lg_\gamma] $
is nonzero. Now the conditions on $\al,\be,\ga$ contradict again corollary  $\ref{roots1}$.
\end{proof}

It is clear that a hexagonally graded Lie algebra is also triangularly graded. But also
the converse is true.

\begin{prop}\label{hextrig}
A Lie algebra $\Lg$ is hexagonally graded if and only if it is triangularly graded.
\end{prop}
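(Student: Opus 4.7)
The forward direction has already been noted in the text, so the task is to show that a triangularly graded Lie algebra $\Lg$ admits a hexagonal grading. I would reduce to the nonabelian case and invoke the previous lemma so that $\Lg$ is two-step nilpotent, hence $[\Lg,\Lg] \subseteq Z(\Lg)$. The plan is to reorganise the triangular decomposition into six slots indexed by the sixth roots of unity.

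First I would identify the triangle structure hidden in the grading. Choose a graded complement $V = \bigoplus_\al V_\al$ of $Z(\Lg)$ inside $\Lg$ (possible since $Z(\Lg)$ is a graded subspace). For each $\al$ with $V_\al \neq 0$, non-centrality provides some $\be$ with $[V_\al, V_\be] \neq 0$, forcing $\be/\al \in \{\om, \om^2\}$ by the triangular hypothesis. Hence the indices $\al$ with $V_\al \neq 0$ partition into triangle classes, each contained in some set of the form $\{c, \om c, \om^2 c\}$, and pieces in triangles of different scale cannot bracket nontrivially.

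Next I would assemble the hexagonal grading by rescaling each triangle down to the sixth roots of unity. Setting $\al = 1$, $\be = \om$, $\ga = \om^2$, I would declare $V_{c_T\ze}$ to lie in $\Lg'_\ze$ for every triangle $T$ with scale $c_T$ and every $\ze \in \{1, \om, \om^2\}$; each bracket image $[V_{c_T\ze_1}, V_{c_T\ze_2}] \subseteq Z(\Lg)$ to lie in $\Lg'_{\ze_1+\ze_2}$, which is one of $\Lg'_{-1}, \Lg'_{-\om}, \Lg'_{-\om^2}$; and a chosen graded complement of $[\Lg,\Lg]$ inside $Z(\Lg)$ to lie in $\Lg'_1$. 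The six resulting slots $\Lg'_\ze$ are then indexed by the distinct complex numbers $\al, \be, \ga, \al+\be, \al+\ga, \be+\ga$. Condition $(2)$ is immediate because the slots $\Lg'_{-1}, \Lg'_{-\om}, \Lg'_{-\om^2}$ contain only elements of $[\Lg,\Lg] \subseteq Z(\Lg)$, and condition $(1)$ holds by construction once one uses that triangles with distinct scales do not interact. The main obstacle I anticipate is the bookkeeping: systematically splitting each graded piece $\Lg_\al$ into its generator, bracket-image, and residual-central contributions across possibly several coexisting triangles, and verifying the additive grading identity $[\Lg'_{\ze_1}, \Lg'_{\ze_2}] \subseteq \Lg'_{\ze_1+\ze_2}$ slot by slot.
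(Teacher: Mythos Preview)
Your approach is correct and matches the paper's own proof closely: both choose a graded complement $V=\bigoplus V_\al$ of a central graded ideal (you take $Z(\Lg)$, the paper takes $[\Lg,\Lg]$), partition the indices with $V_\al\neq 0$ into triangle classes via the relation $\al^3=\be^3$, observe that distinct classes commute, and then collapse every triangle onto the sixth roots of unity. The paper packages the last step as writing $\Lg$ as a direct sum of commuting hexagonally graded ideals $\Lg(\al)=V_\al\oplus V_{\om\al}\oplus V_{\om^2\al}\oplus[V_\al,V_{\om\al}]\oplus[V_\al,V_{\om^2\al}]\oplus[V_{\om\al},V_{\om^2\al}]$ and calls the final rescaling ``immediate'', while you build the six slots $\Lg'_\ze$ directly and park the residual central piece in $\Lg'_1$; these are the same construction up to bookkeeping.
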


\begin{proof}
Let $\Lg=\oplus \Lg_{\al}$ be triangularly graded. Then $[\Lg,\Lg]=\oplus_{\al}W_{\al}$ for
linear subspaces $W_{\al}$ of $\Lg_{\al}$. Write $\Lg_{\al}= V_{\al}\oplus W_{\al}$ with a 
complementary vector space $V_{\al}$ and define $V=\oplus_{\al}V_{\al}$. Then $\Lg=
V\oplus [\Lg,\Lg]=\bigoplus_{\al}(V_{\al}\oplus W_{\al})$.
Define an equivalence relation on $\C^{\times}$ by
\[
\al\sim \be \Leftrightarrow \al^3=\be^3.
\]
For $\al\sim \be$ we have $[V_{\al},V_{\be}]\subseteq [\Lg_{\al},\Lg_{\be}]$, which maybe 
nonzero except for $\al=\be$ where $[\Lg_{\al},\Lg_{\al}]\subseteq \Lg_{2\al}=0$.
For $\al\not\sim \be$ we have $[V_{\al},V_{\be}]\subseteq [\Lg_{\al},\Lg_{\be}]=0$.  

Now consider for each $\al\in \C^{\times}$, with $\be=\om \al$, $\ga=\om^2\al$ 
the linear subspaces
\[
\Lg(\al)=V_{\al}\oplus V_{\be}\oplus V_{\ga} \oplus [V_{\al},V_{\be}]\oplus [V_{\al},V_{\ga}]\oplus
[V_{\be},V_{\ga}]  
\]
of $\Lg$. We have $\Lg(\al)=\Lg(\al')$ for $\al \sim \al'$ and $[\Lg(\al),\Lg(\al')]=0$ for
$\al \not\sim \al'$, because $\Lg$ is two-step nilpotent and $[V_{\al},V_{\al'}]=0$ in this case.
We want to show that $\Lg$ is a direct sum of ideals $\Lg(\al)$, i.e.,
\[
\Lg=\bigoplus_{\al/\sim}\Lg(\al).
\]
This would immediately imply that $\Lg$ itself is hexagonally graded.
First it is easy to see that all $\Lg(\al)$ are Lie subalgebras. 
Then consider the sum $\sum_{\al/\sim} \Lg(\al)$ of commuting subalgebras.
It is a Lie subalgebra of $\Lg$ and contains $V$. Hence it coincides with $\Lg$, since
$V$ generates $\Lg$ as a Lie algebra.
Then we have $[\Lg,\Lg(\al)]\subseteq [\Lg(\al),\Lg(\al)]\subseteq \Lg(\al)$, so that 
$\Lg(\al)$ is a Lie ideal of $\Lg$. 
It remains to show that the sum is direct.
Recall that $\Lg=\bigoplus_{\al}(V_{\al}\oplus W_{\al})$.
Suppose that a subspace $V_{\theta}$ occurs in the decomposition of two ideals
$\Lg(\al)$ and $\Lg(\al')$. Then $\theta=\om^i\al=\om^j\al'$ for some $i,j\in \{0,1,2\}$.
Hence $\al^3=(\om^i\al)^3=\theta^3=(\om^j\al')^3=(\al')^3$, so that $\al\sim \al'$ and
$\Lg(\al)=\Lg(\al')$. Thus each $V_{\theta}$ occurs at most once in the above sum.
A similar argument shows that each $W_{\theta}$ appears at most once
in the sum. Hence the sum is direct.
\end{proof}

\begin{cor}\label{3.8}
Let $\Lg$ be a Lie algebra admitting a periodic derivation. Then $\Lg$ is hexagonally
graded.
\end{cor}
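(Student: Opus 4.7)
The plan is to show that a periodic derivation $D$ on $\Lg$ yields, via its eigenspace decomposition, a triangular grading of $\Lg$, and then to invoke Proposition \ref{hextrig} to upgrade this to a hexagonal grading.

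First I would use that any complex endomorphism of finite multiplicative order is semisimple, so $D$ is diagonalisable. Since $D$ is nonsingular, all eigenvalues lie in $\C^{\times}$, and we obtain a direct-sum decomposition
\[
\Lg = \bigoplus_{\al \in \C^{\times}} \Lg_{\al},
\]
where $\Lg_{\al}$ is the $\al$-eigenspace of $D$ (with $\Lg_{\al}=0$ for all but finitely many $\al$). The standard derivation computation $D([x,y]) = [D(x),y] + [x,D(y)]$ shows $[\Lg_{\al},\Lg_{\be}] \subseteq \Lg_{\al+\be}$, so this is an additive grading of $\Lg$ by $\C^{\times}$.

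Next I would verify the triangularity condition. Suppose $[\Lg_{\al},\Lg_{\be}] \neq 0$. Then there exist $x \in \Lg_{\al}$, $y \in \Lg_{\be}$ with $[x,y]$ a nonzero eigenvector of $D$ for the eigenvalue $\al+\be$. Since $D$ has finite order $m$, the numbers $\al$, $\be$ and $\al+\be$ are all $m$-th roots of unity, so by Lemma \ref{omega} we have $\be = \om\al$ for a primitive third root of unity $\om$. Thus $\Lg$ is triangularly graded in the sense of Definition~3.6.

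Finally, Proposition \ref{hextrig} asserts that every triangularly graded Lie algebra is hexagonally graded, which concludes the proof. There is essentially no obstacle here: the content has already been isolated in Lemma \ref{omega}, Proposition \ref{multiple} and Proposition \ref{hextrig}; the corollary just assembles these ingredients using the eigenspace decomposition of the semisimple operator $D$.
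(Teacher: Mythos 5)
Your proposal is correct and follows exactly the paper's own argument: semisimplicity of the finite-order derivation gives the eigenspace decomposition, Lemma \ref{omega} applied to the eigenvalues $\al,\be,\al+\be$ yields the triangular grading, and Proposition \ref{hextrig} upgrades this to a hexagonal grading. No differences worth noting.
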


\begin{proof}
Let $D$ be a periodic derivation of $\Lg$. Then $D$ is semisimple and we may consider
the eigenspace decomposition $\Lg=\oplus_{\al}\Lg_{\al}$ of $\Lg$ with respect to $D$.
If $[\Lg_{\al},\Lg_{\be}]\neq 0$, then $\al,\be$ and $\al+\be$ are eigenvalues of $D$, hence
$m$-th roots of unity. By lemma $\ref{omega}$ we have $\be=\om \al$. Hence $\Lg$ is triangularly
graded and the claim follows from proposition $\ref{hextrig}$.
\end{proof}

\begin{cor}\label{3.9}
Let $\Lg$ be a Lie algebra admitting a nonsingular derivation whose inverse is
again a derivation. Then $\Lg$ is hexagonally graded.
\end{cor}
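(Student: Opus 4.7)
The plan is to extract a semisimple derivation whose inverse is again a derivation, verify the triangular condition on its eigenspaces, and then invoke Proposition \ref{hextrig}. Let $D$ be the given nonsingular derivation of $\Lg$ with $D^{-1}$ also a derivation.

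First I would reduce to the semisimple case via the Jordan decomposition. Write $D = S + N$ with $S$ semisimple, $N$ nilpotent, and $SN=NS$. In characteristic zero the semisimple and nilpotent parts of a derivation are again derivations, so $S,N\in\Der(\Lg)$, and $S$ is nonsingular because $D$ is. Using that $S^{-1}N$ is nilpotent and commutes with $S$, the expansion
\[
D^{-1} = (\id + S^{-1}N)^{-1} S^{-1} = S^{-1} + \sum_{k \geq 1} (-1)^k (S^{-1}N)^k S^{-1}
\]
is a finite sum exhibiting the Jordan decomposition of $D^{-1}$, with semisimple part $S^{-1}$. Since $D^{-1}$ is a derivation, so is its semisimple part $S^{-1}$.

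Next, I would consider the eigenspace decomposition $\Lg = \bigoplus_{\al} \Lg_{\al}$ of $S$. For $x \in \Lg_{\al}$ and $y \in \Lg_{\be}$ with $[x,y] \neq 0$, the bracket $[x,y]$ is simultaneously an eigenvector of $S$ with eigenvalue $\al+\be$ and of $S^{-1}$ with eigenvalue $(\al+\be)^{-1}$. The derivation property of $S^{-1}$ applied to $[x,y]$ yields
\[
(\al+\be)^{-1} = \al^{-1} + \be^{-1},
\]
equivalently $\al\be = (\al+\be)^2$. Setting $t=\be/\al$ reduces this to $t^2+t+1=0$, so $\be=\om\al$ for a primitive third root of unity $\om$. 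Thus the eigenspace decomposition of $S$ is a triangular grading of $\Lg$, and Proposition \ref{hextrig} upgrades it to a hexagonal grading.

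The only step that requires real care is the Jordan reduction, since we need the hypothesis on $D^{-1}$ to transfer to $S^{-1}$; this rests on identifying $S^{-1}$ as the semisimple part of $D^{-1}$, which is exactly what the geometric-series expansion above accomplishes. Everything afterwards is either a one-line algebraic identity or a direct appeal to Proposition \ref{hextrig}.
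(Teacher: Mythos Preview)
Your proof is correct and follows the same route as the paper: reduce to a semisimple derivation whose inverse is again a derivation, read off the identity $(\al+\be)^{-1}=\al^{-1}+\be^{-1}$ on noncommuting eigenspaces to obtain $\be=\om\al$, and then apply Proposition~\ref{hextrig}. Your treatment of the Jordan reduction is in fact more careful than the paper's, which simply asserts that one may pass to semisimple parts; you explicitly identify $S^{-1}$ as the semisimple part of $D^{-1}$, which is the point needed to ensure that $S^{-1}$ inherits the derivation property.
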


\begin{proof}
If there is such a derivation $D$, then we may assume that $D$ and $D^{-1}$ are semisimple,
by replacing them with the corresponding semisimple parts which are again nonsingular derivations. 
Consider again the eigenspace decomposition $\Lg=\oplus_{\al}\Lg_{\al}$ of $\Lg$ with respect to $D$.
If $[\Lg_{\al},\Lg_{\be}]\neq 0$, then $\al^{-1},\be^{-1}$ are eigenvalues of $D^{-1}$, and
$[\Lg_{\al},\Lg_{\be}]$ this is an eigenvector of $D^{-1}$ with eigenvalue
$\al^{-1}+\be^{-1}=(\al+\be)^{-1}$. The latter equation again implies that $\be=\om \al$. 
Hence $\Lg$ is triangularly graded and the claim follows from proposition $\ref{hextrig}$.
\end{proof}

Now we can summarize our characterizations as follows.

\begin{thm}\label{3.10}
For a complex Lie algebra the following statements are equivalent.
\begin{itemize}
\item[(1)] $\Lg$ admits a periodic derivation.
\item[(2)] $\Lg$ is hexagonally graded.
\item[(3)] $\Lg$ admits a nonsingular derivation whose inverse is
again a derivation. 
\end{itemize}
\end{thm}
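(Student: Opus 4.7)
The plan is to assemble the equivalence from the preceding results, which collectively already establish each implication. I would form the cycle $(1)\Rightarrow (2)\Rightarrow (1)$ together with $(1)\Rightarrow (3)\Rightarrow (2)$, so that the three statements are mutually equivalent.

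Concretely, $(1)\Rightarrow (2)$ is exactly Corollary \ref{3.8}: diagonalize a periodic derivation $D$, use Lemma \ref{omega} to see that the resulting eigenspace decomposition is a triangular grading (whenever $[\Lg_\al,\Lg_\be]\neq 0$, the eigenvalues $\al,\be,\al+\be$ are $m$-th roots of unity, forcing $\be=\om\al$), and then invoke Proposition \ref{hextrig} to upgrade the triangular grading to a hexagonal one. Conversely, $(2)\Rightarrow (1)$ is Lemma \ref{3.4}, which constructs a periodic derivation of order six directly from a hexagonal grading $\Lg=\bigoplus_{\ze^6=1}\Lg_\ze$ by the rule $D(x)=\ze x$ for $x\in\Lg_\ze$.

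For the remaining two implications I would cite Proposition \ref{inverse} for $(1)\Rightarrow (3)$, since a periodic derivation is automatically nonsingular and its inverse is again a derivation, and Corollary \ref{3.9} for $(3)\Rightarrow (2)$. Together with the two implications above, this closes the required cycle.

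There is no substantial new obstacle here: Theorem \ref{3.10} is a summary statement, and all of the technical work has been carried out in earlier results---most significantly in the nontrivial equivalence of triangular and hexagonal gradings (Proposition \ref{hextrig}), which is the real heart of this characterization. Should one prefer an alternative routing, $(2)\Rightarrow (3)$ also follows directly by composing Lemma \ref{3.4} with Proposition \ref{inverse}: the explicit order-six derivation acts on $\Lg_\ze$ as multiplication by $\ze$, and its inverse acts as multiplication by $\ze^{-1}$, which is a derivation because the grading is hexagonal.
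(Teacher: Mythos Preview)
Your proof is correct and essentially identical to the paper's: both simply assemble the earlier results (Lemma~\ref{3.4}, Proposition~\ref{inverse}, Corollaries~\ref{3.8} and~\ref{3.9}) into a cycle of implications. The paper routes the cycle as $(1)\Rightarrow(2)\Rightarrow(3)\Rightarrow(1)$, whereas you use $(1)\Leftrightarrow(2)$ together with $(1)\Rightarrow(3)\Rightarrow(2)$; this is an immaterial difference, and you even mention the paper's $(2)\Rightarrow(3)$ route explicitly as an alternative.
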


\begin{proof}
First, $(1)$ implies $(2)$ by corollary $\ref{3.8}$. Then $(2)$ implies $(3)$ by lemma $\ref{3.4}$
and proposition $\ref{inverse}$. Finally $(3)$ implies $(1)$ by corollary $\ref{3.9}$ and 
lemma $\ref{3.4}$.
\end{proof}

Another consequence is the following characterization.

\begin{cor}
A Lie algebra admits a periodic derivation if and only if it admits a periodic
derivation of order six.
\end{cor}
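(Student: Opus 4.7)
The backward implication is immediate since a periodic derivation of order six is in particular a periodic derivation, so there is nothing to check. The nontrivial content is the forward implication, and my plan is to get it for free by chaining two results already established in the paper.

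Suppose $\Lg$ admits a periodic derivation (of arbitrary order $m$). By Theorem~\ref{3.10}, the implication $(1) \Rightarrow (2)$ (which itself is Corollary~\ref{3.8}) gives that $\Lg$ is hexagonally graded. Then Lemma~\ref{3.4} produces a periodic derivation of order six on $\Lg$, namely the linear map acting on each homogeneous component $\Lg_\zeta$ as multiplication by the corresponding sixth root of unity $\zeta$. This completes the nontrivial direction.

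Since both required facts are already available, there is no real obstacle to overcome: the entire argument is a short citation of Corollary~\ref{3.8} followed by Lemma~\ref{3.4}. If one wished to make the proof self-contained, the only subtle step would be the one hidden in Corollary~\ref{3.8}, which uses Lemma~\ref{omega} to show that any two noncommuting eigenspaces of a periodic derivation have eigenvalues related by a primitive cube root of unity, and thus the eigenspace decomposition is automatically triangular and hence, by Proposition~\ref{hextrig}, hexagonal. But all of this is already packaged into the statements being invoked, so the proof reduces to essentially one sentence in each direction.
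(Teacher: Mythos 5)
Your proof is correct and follows essentially the same route as the paper: the paper's own argument also reduces the forward direction to the fact that a Lie algebra with a periodic derivation is hexagonally graded (Corollary~\ref{3.8}/Theorem~\ref{3.10}) and then invokes Lemma~\ref{3.4} to produce a derivation of order six. The only cosmetic difference is that the paper explicitly sets aside the abelian case first, which is harmless since an abelian algebra has $\zeta_6\id$ as a derivation of order six.
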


\begin{proof}
We may assume that the Lie algebra $\Lg$ is nonabelian. Suppose that $\Lg$ admits a 
periodic derivation. Its order is a multiple of six. Then $\Lg$ is hexagonally graded, so that
lemma $\ref{3.4}$ yields a periodic derivation of order six. 
\end{proof}

\begin{cor}
If $\Lg$ admits a periodic derivation of order six, then it also admits
a periodic derivation of order $6k$ for all $k\ge 1$.
\end{cor}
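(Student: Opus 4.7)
My plan is to exhibit a periodic derivation of order $6k$ as a scalar multiple of the given one. If $\Lg$ is abelian, then every order is already realized by scalar multiples of the identity (as noted after Definition 2.1), so I will assume $\Lg$ is nonabelian throughout.

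Let $D$ be the given periodic derivation of order six, and let $\eta$ be a primitive $6k$-th root of unity. Since $\Der(\Lg)$ is a vector space, $D' := \eta D$ is again a derivation. The easy direction is that $(D')^{6k} = \eta^{6k} D^{6k} = 1 \cdot \id = \id$, so the order of $D'$ divides $6k$.

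The main step is showing that the order of $D'$ is \emph{exactly} $6k$. Suppose $(D')^n = \id$, so that $D^n = \eta^{-n}\id$ is a scalar operator. Since $\Lg$ is nonabelian, the proof of Proposition \ref{multiple} produces eigenvectors $u,v$ of $D$ with eigenvalues $\al,\be$ such that $[u,v]$ is a nonzero eigenvector with eigenvalue $\al+\be$, and moreover $\be = \om \al$ for a primitive third root of unity $\om$, so $\al + \be = (1+\om)\al$. Thus $\al$ and $(1+\om)\al$ are both eigenvalues of $D$, and for $D^n$ to be scalar their ratio must satisfy $(1+\om)^n = 1$. Since $1+\om$ is a primitive sixth root of unity, this forces $6 \mid n$, whence $D^n = \id$ and therefore $\eta^{-n} = 1$, giving $6k \mid n$. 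Combined with $n \mid 6k$, we conclude $n = 6k$.

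I do not expect any serious obstacle here: the only thing to check carefully is that in the nonabelian case $D^n$ being a scalar already forces $D^n = \id$, which follows from the presence of the ratio $1+\om$ among ratios of eigenvalues of $D$. This is exactly the observation underlying Proposition \ref{multiple}, so reusing it makes the argument clean.
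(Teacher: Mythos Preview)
Your argument is correct and uses the same idea as the paper: obtain the derivation of order $6k$ as a scalar multiple of $D$. The only difference is in how the exact order is verified. The paper first multiplies $D$ by a suitable sixth root of unity so that one eigenvalue becomes $1$, and then multiplies by a primitive $6k$-th root $\zeta$; the resulting diagonal derivation visibly has order exactly $6k$ because one entry is $\zeta$. You skip the normalization and instead use the eigenvalue ratio $1+\om$ from the proof of Proposition~\ref{multiple} to force $6\mid n$ whenever $D^n$ is scalar. Both routes are short; the paper trades your appeal to Proposition~\ref{multiple} for one extra rescaling. (Minor wording point: your final line ``Combined with $n\mid 6k$'' only makes sense if $n$ denotes the order of $D'$, not an arbitrary exponent with $(D')^n=\id$; you should say this explicitly.)
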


\begin{proof}
If $D$ is a periodic derivation of order six, then $D=\diag(\al_1,\ldots , \al_n)$ with 
$\al_i$ being sixth roots of unity. By multiplying $D$ with a suitable sixth root of unity, we may 
assume that one of the $\al_i$ equals $1$, say $\al_j=1$. If we then multiply by a primitive 
$6k$-th root of unity, for each $k\ge 1$, then we obtain a diagonal derivation of order 
exactly $6k$, since the $j$-th entry has order exactly $6k$ in $\C^{\times}$.
\end{proof}

\section{Quotients of $N(2,g)$ by homogeneous ideals}

Every two-step nilpotent Lie algebra $\Lg$ is a quotient of $\Lf=N(2,g)$ by some ideal $I$
contained in $[\Lf,\Lf]$. We always assume that $\Lg$ is nonabelian, so that $g\ge 2$.
The dimension $r=\dim (I)\ge 0$ is an invariant of $\Lg$, the number of relations. We have
\[
\dim (\Lf)=g+\binom{g}{2},\; \dim (\Lg)=g+\binom{g}{2}-r.
\]
Denote by $\CX$ a minimal generating subset of $\Lf$, with $g$ elements.

\begin{defi}\label{partit} An ideal $J$ of $\Lf=N(2,g)$ is said to \emph{partition} 
$\Lf$ \emph{homogeneously}, 
if there exists a partition $\CX=X\cup Y\cup Z$ of $\CX$ into three subsets $X,Y,Z$ such that 
$$ J = J_X + J_Y + J_Z + J_{X,Y} + J_{X,Z} + J_{Y,Z}, $$ 
where $J_X = \langle [x,x'] | x,x' \in X \rangle$ and $J_{X,Y}$ is a linear subspace of 
$\langle [x,y] | x \in X, y \in Y\rangle$, and so on.
\end{defi}

As an example, consider $\Lf$ with $g=2m$ generators, and partition
\[
\CX=X\cup Y \cup Z=\{x_1,\ldots ,x_m\}\cup \{y_1,\ldots ,y_m\}\cup \emptyset .
\]
Define
\begin{align*}
J_X & =\langle [x_i,x_j] \mid 1\le i,j \le m\rangle ,\\
J_Y & =\langle [y_i,y_j] \mid 1\le i,j \le m\rangle ,\\
J_{X,Y} & = \langle [x_i,y_i]-[x_j,y_j]\mid 1\le i,j \le m\rangle,
\end{align*}
and $J_Z=J_{X,Z}=J_{Y,Z}=0$. Then  $J = J_X + J_Y + J_{X,Y}$ partitions $\Lf$ homogeneously,
and the quotient $\Lf/J$ is a two-step nilpotent Lie algebra with $2m$ generators and
$1$-dimensional commutator, which is obviously the Heisenberg Lie algebra $\Lh_m$ of 
dimension $2m+1$.

We obtain another characterization of Lie algebras admitting  a periodic derivation as follows.

\begin{prop}
A Lie algebra is hexagonally graded if and only if it is a quotient of $N(2,g)$ by a
homogeneously partitioning ideal.
\end{prop}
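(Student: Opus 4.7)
The plan is to prove the two implications separately, translating back and forth between the hexagonal grading on $\Lg$ and the partition of the generating set of $N(2,g)$.

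For the forward implication, suppose $\Lg$ is hexagonally graded. I would first choose bases $X_0, Y, Z$ of $\Lg_\al, \Lg_\be, \Lg_\ga$ respectively. Since the three central summands $\Lg_{\al+\be}, \Lg_{\al+\ga}, \Lg_{\be+\ga}$ need not coincide with the commutator subspaces $[\Lg_\al,\Lg_\be], [\Lg_\al,\Lg_\ga], [\Lg_\be,\Lg_\ga]$, I would pick bases for their complements inside the corresponding components and absorb these extra central basis vectors into $X$. Setting $g = |X|+|Y|+|Z|$ and $\CX = X\cup Y\cup Z$, the natural surjection $\pi\colon \Lf = N(2,g) \to \Lg$ has kernel $J \subseteq [\Lf,\Lf]$. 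The heart of the argument is to show that $J$ decomposes compatibly with the partition-induced direct sum decomposition
\[
[\Lf,\Lf] = \langle [X,X]\rangle \oplus \langle [Y,Y]\rangle \oplus \langle [Z,Z]\rangle \oplus \langle [X,Y]\rangle \oplus \langle [X,Z]\rangle \oplus \langle [Y,Z]\rangle.
\]
Under $\pi$ the three diagonal summands map to zero (on the $X_0,Y,Z$ part because $2\al, 2\be, 2\ga$ are not among the six indices, and on the extra-generator side because those generators are central), while the three mixed summands map into the \emph{distinct} graded components $\Lg_{\al+\be}, \Lg_{\al+\ga}, \Lg_{\be+\ga}$. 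The point I expect to require most care is precisely here: one must rule out ``off-diagonal'' elements of $J$ mixing the three mixed sectors, and this follows from the linear independence of the three target graded components in the hexagonal decomposition of $\Lg$.

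For the converse, let $\Lg = \Lf/J$ with $J$ homogeneously partitioned by $\CX = X \cup Y \cup Z$, and fix $\al = 1,\ \be = \om,\ \ga = \om^2$. I would define the six graded components of $\Lg$ as the images modulo $J$ of $\s X, \s Y, \s Z, \langle [X,Y]\rangle, \langle [X,Z]\rangle, \langle [Y,Z]\rangle$. Because $J_X, J_Y, J_Z$ absorb the three diagonal commutator summands of $[\Lf,\Lf]$ and the mixed parts of $J$ sit inside the corresponding sectors, the six images yield a direct sum decomposition of $\Lg$. The grading condition $[\Lg_i, \Lg_j] \subseteq \Lg_{i+j}$ (with the convention that $\Lg_\zeta = 0$ for $\zeta$ outside the six indices) falls out directly from the bracket relations in $\Lf$, and the centrality of the three sum-indexed components is immediate from the two-step nilpotency of $\Lf$. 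Altogether this delivers a hexagonal grading on $\Lg$, completing the characterization.
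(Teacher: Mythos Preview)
Your proposal is correct and follows essentially the same route as the paper: in both directions the argument hinges on the direct-sum decomposition of $[\Lf,\Lf]$ into the six bracket sectors and the linear independence of the three sum-indexed components $\Lg_{\al+\be},\Lg_{\al+\ga},\Lg_{\be+\ga}$ in the target. The one point on which you are more explicit than the paper is the possible mismatch $[\Lg_\al,\Lg_\be]\subsetneq\Lg_{\al+\be}$; the paper disposes of this with a terse ``we may assume'' (tacitly regrading the extra central elements into one of $\Lg_\al,\Lg_\be,\Lg_\ga$), whereas you absorb them into $X$ as additional generators---both fixes work and lead to the same core computation.
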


\begin{proof}
If $\Lg=N(2,g)/J$ is such a quotient, then it is easy to check that
\begin{align*}
\Lg & =\Lg_{\al}\oplus \Lg_{\be}\oplus \Lg_{\ga}\oplus \Lg_{\al+\be}\oplus \Lg_{\al+\ga}\oplus 
\Lg_{\be+\ga} \\
   & = \langle X \rangle \oplus \langle Y \rangle \oplus \langle Z \rangle \oplus
 \langle [X,Y] \rangle \oplus \langle [X,Z] \rangle \oplus  \langle [Y,Z] \rangle \mod J
\end{align*}
yields a well-defined hexagonal grading. \\
Conversely, assume that $\Lg$ is hexagonally graded. Let $X=\{x_1,\ldots ,x_r \}$ be
a basis for $\Lg(\al)$, $Y=\{y_1,\ldots ,x_s \}$ a basis for $\Lg(\be)$ and
$Z=\{z_1,\ldots ,z_t \}$ a basis for $\Lg(\ga)$. We may assume that
\[
[\Lg,\Lg]=\spa \{ [x_i,y_j],[x_i,z_j],[y_i,z_j]\},
\]
so that $\CX=X\cup Y\cup Z$ is a minimal generating set for $\Lg$ as a Lie algebra.
Denote by $\Lf$ the free-nilpotent Lie algebra of class $2$ on the generators
$X_1,\ldots ,X_r$, $Y_1,\ldots ,Y_s$, $Z_1,\ldots ,Z_t$. There exists a Lie algebra
epimorhism $\pi\colon \Lf\ra \Lg$ mapping $X_i$ to $x_i$, $Y_i$ to $y_i$ and
$Z_i$ to $z_i$. Thus $\Lg$ is isomorphic to $\Lf/J$ with $J=\ker (\pi)$. It suffices to show
that $J$ partitions $\Lf$ homogeneously. First observe that $J\subseteq [\Lf,\Lf]$. So
every $R \in J$ is of the form $$R = R_x + R_y + R_z + R_{xy} + R_{xz} + R_{yz},$$
where $R_x \in \spa([x_i,x_j])$, $R_{xy} \in \spa ([x_i,y_j])$, etc. For $X_i$ and $X_j$ in $X$, 
we obtain $\pi([X_i,X_j]) = [\pi(X_i),\pi(X_j)] = [x_i,x_j] = 0$ so that $[X_i,X_j] \in J$ for all 
$X_i,X_j$. Similarly, $[Y_i,Y_j], [Z_i,Z_j] \in J$. So we already know that $R_x,R_y,R_z \in J$. 
It now suffices to show that $R_{xy}, R_{xz}$ and $R_{yz}$ are also contained in $J$. 
Note that $$0 = \pi(R) = \pi(R_{xy}) + \pi(R_{xz}) + \pi(R_{yz}).$$ Since the terms in this 
sum belong to the linearly independent subspaces
\[
\spa([x_i,y_j]) = \Lg(\al + \be), \; \spa([x_i,z_j]) = \Lg(\al+\ga),\;
\spa([y_i,z_j]) = \Lg(\be+\ga)
\]
respectively, they must vanish. We conclude that $R_x,R_y,R_z,R_{xy}, R_{xz}$ and $R_{yz}$ are 
contained in $J$. 
\end{proof}

Theorem $\ref{3.10}$ implies the following result.

\begin{cor}
A Lie algebra admits a periodic derivation if and only if it is a quotient of $N(2,g)$ by a
homogeneously partitioning ideal.
\end{cor}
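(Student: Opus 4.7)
The plan is to obtain this statement as a direct corollary of two equivalences already at our disposal: Theorem \ref{3.10}, which asserts that a complex Lie algebra $\Lg$ admits a periodic derivation if and only if $\Lg$ is hexagonally graded, and the immediately preceding proposition, which asserts that $\Lg$ is hexagonally graded if and only if $\Lg$ is isomorphic to a quotient of some $N(2,g)$ by a homogeneously partitioning ideal. Chaining these two ``if and only if'' statements through the common property of being hexagonally graded yields the desired equivalence, with nothing further to verify.

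In more detail, I would first recall that if $\Lg$ admits a periodic derivation, then by Theorem \ref{3.10} $(1) \Rightarrow (2)$ the Lie algebra $\Lg$ carries a hexagonal grading, and then apply one direction of the preceding proposition to realize $\Lg$ as $N(2,g)/J$ with $J$ a homogeneously partitioning ideal. Conversely, if $\Lg \cong N(2,g)/J$ for such an ideal $J$, the other direction of the preceding proposition produces a hexagonal grading on $\Lg$, and Theorem \ref{3.10} $(2) \Rightarrow (1)$ then supplies the required periodic derivation (which, via Lemma \ref{3.4}, can even be chosen to have order six).

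Since both component results are already proved, there is no genuine obstacle here: the corollary is purely a bookkeeping statement that composes two equivalences. The only thing I would be careful about is that in both references the auxiliary data (the generator count $g$, the partition $\CX = X \cup Y \cup Z$, and the subspaces $V_\alpha, V_\beta, V_\gamma$) match up consistently, so that the quotient $N(2,g)/J$ produced from a hexagonal grading is indeed the same Lie algebra one started with; this consistency is already recorded in the proof of the preceding proposition, so no additional argument is required.
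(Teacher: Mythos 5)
Your proposal is correct and matches the paper exactly: the paper derives this corollary by combining Theorem \ref{3.10} (periodic derivation $\Leftrightarrow$ hexagonally graded) with the immediately preceding proposition (hexagonally graded $\Leftrightarrow$ quotient of $N(2,g)$ by a homogeneously partitioning ideal). Nothing further is needed.
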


\begin{prop}
Let $\Lg=\Lf/I$ of dimension $n$ with $g$ generators and $\dim (I)=r$. 
Assume that $\Lg$ admits a periodic derivation. 
Then we have the following estimates:
\begin{align*}
g & \le n\le \frac{g^2}{3}+g, \\[0.2cm]
\frac{g(g-3)}{6} & \le r\le \frac{g(g-1)}{2}. 
\end{align*}
\end{prop}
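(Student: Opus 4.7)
The plan is to leverage the equivalences of Theorem \ref{3.10} together with the corollary immediately preceding this proposition: since $\Lg$ admits a periodic derivation, we may write $\Lg=\Lf/I$ with $\Lf=N(2,g)$ and $I$ a homogeneously partitioning ideal in the sense of Definition \ref{partit}. This identification is consistent with the proposition's hypothesis that $\Lg$ has $g$ generators, because $I\subseteq[\Lf,\Lf]$ forces $\Lg/[\Lg,\Lg]\cong \Lf/[\Lf,\Lf]$ to be $g$-dimensional.

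Two of the four bounds follow at once from $I\subseteq[\Lf,\Lf]$. Namely, $r=\dim(I)\le\dim[\Lf,\Lf]=\binom{g}{2}=g(g-1)/2$ gives the upper bound on $r$, and $n=\dim(\Lg)\ge \dim(\Lg/[\Lg,\Lg])=g$ gives the lower bound on $n$.

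The substantive step will be the lower bound $r\ge g(g-3)/6$. Writing the homogeneous partition as $\CX=X\cup Y\cup Z$ with $|X|=a$, $|Y|=b$, $|Z|=c$ and $a+b+c=g$, I would first observe that the six summands in
$$I=J_X+J_Y+J_Z+J_{X,Y}+J_{X,Z}+J_{Y,Z}$$
lie in distinct components of the natural $\Z^3$-grading on $\Lf$, so the sum is direct. Moreover $J_X$ is the full bracket square of $\langle X\rangle$ in $\Lf$, a space of dimension $\binom{a}{2}$, and likewise for $J_Y$ and $J_Z$. Discarding the three off-diagonal contributions therefore yields
$$r\ge \binom{a}{2}+\binom{b}{2}+\binom{c}{2}=\frac{(a^2+b^2+c^2)-g}{2}.$$
Since $a^2+b^2+c^2\ge (a+b+c)^2/3=g^2/3$ by Cauchy–Schwarz (or convexity of $x\mapsto x^2$), this gives $r\ge (g^2/3-g)/2=g(g-3)/6$.

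The remaining inequality, $n\le g^2/3+g$, is then just the identity $n=g+\binom{g}{2}-r$ combined with the lower bound on $r$ just established. The only genuine obstacle is the lower bound on $r$; everything hinges on recognizing that the hexagonal (equivalently, homogeneously partitioning) structure forces the three ``diagonal'' pieces $J_X,J_Y,J_Z$ to live inside $I$, after which the bound is a one-line quadratic inequality on $(a,b,c)$.
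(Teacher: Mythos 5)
Your proposal is correct and follows essentially the same route as the paper: invoke the hexagonal grading / homogeneously partitioning ideal characterization, note that $J_X+J_Y+J_Z\subseteq I$ forces $r\ge\binom{|X|}{2}+\binom{|Y|}{2}+\binom{|Z|}{2}\ge g(g-3)/6$ by convexity, take $r\le\dim[\Lf,\Lf]$ for the upper bound, and deduce the bounds on $n$ from $n=g+\binom{g}{2}-r$. You merely spell out details (directness of the sum, the quadratic inequality) that the paper leaves implicit.
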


\begin{proof}
By theorem $\ref{3.10}$, $\Lg$ is hexagonally graded, so that we may assume that $I$ partitions 
$\Lf$ homogeneously.
Let $X,Y$ and $Z$ as in the definition $\ref{partit}$. Then $g = |X| + |Y| + |Z|$ is a partition of $g$. 
Since $I$ contains the span of $[x,x'],[y,y']$ and $[z,z']$ for $x,x' \in X;y,y'\in Y;z,z' \in Z$, its 
dimension $r$ is at least $\binom{|X|}{2} + \binom{|Y|}{2} + \binom{|Z|}{2} \geq g (g - 3) / 6$. 
On the other hand we always have $r\le \dim ([\Lf,\Lf])=g(g-1)/2$. This shows the second estimate.
The first one follows form this by substituting $r = g + \binom{g}{2} - n$.
\end{proof}

We can use this result to classify Lie algebras $\Lg=\Lf/I$ with small $r$, admitting
a periodic derivation. The second estimate is equivalent to  
$\frac{\sqrt{8r+1}+1}{2} \le g\le \frac{\sqrt{24r+9}+3}{2}$. For a given small $r$ it restricts the possible 
values for $g$ very much. If $\Lg$ is nonabelian, then we have $\frac{\sqrt{8r+1}+1}{2} < g$.

\begin{prop}
Let $\Lg=N(2,g)/I$ be a two-step nilpotent Lie algebra with small $r=\dim (I)$, namely 
$0\le r\le 2$. Then $\Lg$ admits a periodic derivation if and only if it is isomorphic 
to a Lie algebra in the table below.
\vspace*{0.5cm}
\begin{center}
\begin{tabular}{c|c|c|c}
$r$ & $g$  & $\Lg$ & $\dim (\Lg)$ \\
\hline
$0$ & $2$ & $N(2,2)$ & $3$ \\
$0$ & $3$ & $N(2,3)$ & $6$ \\
$1$ & $3$ & $N(2,3)/\langle [x_2,x_3] \rangle$ & $5$ \\
$1$ & $4$ & $N(2,4)/\langle [x_1,x_2] \rangle$ & $9$ \\
$2$ & $3$ & $N(2,3)/\langle [x_1,x_2],[x_1,x_3] \rangle$ & $4$ \\
$2$ & $4$ & $N(2,4)/\langle [x_1,x_2],[x_3,x_4] \rangle$ & $8$ \\
    &     & $N(2,4)/\langle [x_2,x_4],[x_3,x_4] \rangle$ & $8$ \\
$2$ & $5$ & $N(2,5)/\langle [x_1,x_2],[x_3,x_4] \rangle$ & $13$ 
\end{tabular}
\end{center}
\end{prop}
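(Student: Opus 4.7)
The plan is to combine Theorem~\ref{3.10} (in the corollary form just established for quotients of $N(2,g)$) with the bounds from the preceding proposition. That corollary says $\Lg = N(2,g)/I$ admits a periodic derivation if and only if, after a suitable change of minimal generating set, the ideal $I$ partitions $\Lf = N(2,g)$ homogeneously in the sense of Definition~\ref{partit}. For nonabelian $\Lg$ the bounds force $g \in \{2,3\}$ when $r=0$, $g \in \{3,4\}$ when $r=1$, and $g \in \{3,4,5\}$ when $r=2$, so the classification reduces to a finite enumeration.

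For each admissible $(r,g)$ I would enumerate the partitions $g = |X|+|Y|+|Z|$. A homogeneously partitioning ideal contains the full subspaces $J_X, J_Y, J_Z$, whose dimensions sum to the forced contribution $\binom{|X|}{2}+\binom{|Y|}{2}+\binom{|Z|}{2}$; this value must be at most $r$, and the residual dimension is freely distributed among the cross-summands $J_{X,Y}, J_{X,Z}, J_{Y,Z}$. After applying $GL$-changes inside each graded block, most cases collapse to a single normal form: the algebras $N(2,2)$ and $N(2,3)$ for $r=0$; the unique edge-deletion in $N(2,3)$ resp.\ $N(2,4)$ for $r=1$; the ``two-edge star'' $N(2,3)/\langle [x_1,x_2],[x_1,x_3]\rangle$ when $r=2, g=3$; and, when $r=2, g=5$, the only partition meeting the bound is $(2,2,1)$, which forces $I = \langle [x_1,x_2],[x_3,x_4]\rangle$ with no cross relations.

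The delicate case is $r=2, g=4$, where both partitions $(2,2,0)$ and $(2,1,1)$ give admissible ideals. Normalising the cross-summands by $GL$-actions within each block, I expect exactly two surviving classes: the \emph{disjoint} ideal $\langle [x_1,x_2],[x_3,x_4]\rangle$ and the \emph{adjacent} ideal $\langle [x_2,x_4],[x_3,x_4]\rangle$, the latter being isomorphic to $\langle [x_1,x_2],[x_1,x_3]\rangle$ via the generator swap $x_1\leftrightarrow x_4$. To see that these two are genuinely non-isomorphic (the main obstacle), I would compute the invariant $\min\{\,\text{rk}\,\ad(v) : v \in \Lg \setminus [\Lg,\Lg]\,\}$, which equals $2$ in the disjoint case and $1$ in the adjacent case. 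Conversely, each listed partition immediately exhibits a hexagonal grading on the corresponding quotient, so Lemma~\ref{3.4} supplies the required periodic derivation, completing the \emph{if} direction and the proof.
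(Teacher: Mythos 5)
Your argument is correct, and it is essentially a worked-out version of the route the paper only gestures at in its second sentence (``the result also follows by considering the possible homogeneously partitioning ideals and classifying the resulting quotients''); the paper's primary proof instead imports Gauger's classification of metabelian Lie algebras with $0\le r\le 2$, $2\le g\le 5$ and checks each algebra for a periodic derivation. What your approach buys is self-containedness: the bounds $\tfrac{\sqrt{8r+1}+1}{2}<g\le\tfrac{\sqrt{24r+9}+3}{2}$ correctly force $g\in\{2,3\}$, $\{3,4\}$, $\{3,4,5\}$ for $r=0,1,2$, the forced contribution $\binom{|X|}{2}+\binom{|Y|}{2}+\binom{|Z|}{2}\le r$ kills all but the partitions you name (in particular only $(2,2,1)$ survives for $r=2$, $g=5$), the cross-summands are linearly independent so the residual dimension sits in a single block and is normalised by the $GL$-action there, and your rank invariant $\min\{\mathrm{rk}\,\ad(v): v\notin[\Lg,\Lg]\}$ does take the values $2$ and $1$ on the disjoint and adjacent dimension-$8$ quotients respectively, so the two entries in that row are genuinely distinct. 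The only point worth making explicit is the one you implicitly use in the ``only if'' direction: the corollary produces \emph{some} presentation by a homogeneously partitioning ideal, and since $g$ and $r$ are isomorphism invariants and any isomorphism of quotients of $N(2,g)$ by ideals inside the commutator lifts to an automorphism of $N(2,g)$, you may indeed assume the given $I$ itself partitions homogeneously after a change of generators. The paper's route is shorter given the external classification; yours avoids citing it and simultaneously reproves completeness of the table.
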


\begin{proof}
All two-step nilpotent Lie algebras with $0\le r\le 2$ and $2\le g\le 5$ have been classified
in \cite{GAU}. An easy calculation then shows which of them admit a periodic derivation.
The result also follows by considering the possible homogeneously partitioning ideals and 
classifiying the resulting quotients.
\end{proof}

\begin{rem}
For $r=3$ the result becomes much more complicated. Then the above estimate gives $4\le g\le 6$.
The case $r=3$, $g=4$ is again classified in \cite{GAU}, Theorem $7.15$. 
There are six two-step nilpotent Lie algebras $N(2,4)/I_k$, $1\le k\le 6$
of dimension $7$. The only one {\it not} admitting a periodic derivation is  $N(2,4)/I_5$,
see example $\ref{2.13}$. This yields another interesting
fact. Admitting a periodic derivation is not a degeneration invariant. 

A Lie algebra $\Lg$ is said 
to {\it degenerate} to a Lie algebra $\Lh$, if $\Lh$ lies in the orbit closure of $\Lg$ under the action
of the general linear group acting by base changes. 
In fact, $N(2,4)/I_6$ degenerates to $N(2,4)/I_5$, but $N(2,4)/I_6$ admits a periodic derivation, 
whereas $N(2,4)/I_5$ does not.
\end{rem}

It is also possible to determine the Lie algebras with small commutator subalgebra, admitting
a periodic derivation.

\begin{prop}
Let $\Lg$ be a two-step nilpotent Lie algebra with $\dim ([\Lg,\Lg])\le 2$.
Then $\Lg$ admits a periodic derivation.
\end{prop}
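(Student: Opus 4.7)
The plan is to invoke Theorem \ref{3.10} together with the characterization via homogeneously partitioning ideals, so that the task reduces to exhibiting, in a complement $V$ of $[\Lg,\Lg]$, a direct sum decomposition $V=V_X\oplus V_Y\oplus V_Z$ with $[V_X,V_X]=[V_Y,V_Y]=[V_Z,V_Z]=0$ and with the three cross-brackets $[V_X,V_Y],[V_X,V_Z],[V_Y,V_Z]$ lying in linearly independent subspaces of $[\Lg,\Lg]$. I would proceed by cases on $d:=\dim[\Lg,\Lg]\in\{0,1,2\}$.

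The cases $d\le 1$ are routine. For $d=0$, $\Lg$ is abelian and one takes $V_X=V$, $V_Y=V_Z=0$. For $d=1$, the bracket induces a single alternating form on $V$, which I would put in standard symplectic form: there is a basis $x_1,\ldots,x_m,y_1,\ldots,y_m,c_1,\ldots,c_k$ of $V$ with $[x_i,y_i]$ spanning $[\Lg,\Lg]$ and all other brackets zero. The partition $V_X=\spa(x_1,\ldots,x_m)$, $V_Y=\spa(y_1,\ldots,y_m)$, $V_Z=\spa(c_1,\ldots,c_k)$ then works.

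For $d=2$, write $[\Lg,\Lg]=\C z_1\oplus\C z_2$ and encode the bracket on $V$ by a pair of alternating forms $\om_1,\om_2$ via $[u,v]=\om_1(u,v)z_1+\om_2(u,v)z_2$. I would first split off any abelian summand of $\Lg$ complementary to $[\Lg,\Lg]$ in the center, reducing to the case $\rad(\om_1)\cap\rad(\om_2)=0$. The task is then to choose a new basis $z_1',z_2'$ of $[\Lg,\Lg]$ and a decomposition $V=V_X\oplus V_Y\oplus V_Z$ such that, for the corresponding forms $\om_1',\om_2'$, one has $V_Y\subseteq\rad(\om_2')$ with $V_Y$ being $\om_1'$-isotropic, $V_Z\subseteq\rad(\om_1')$ with $V_Z$ being $\om_2'$-isotropic, and $V_X$ isotropic for both forms. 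This would produce a homogeneous partition with $[V_X,V_Y]\subseteq\C z_1'$, $[V_X,V_Z]\subseteq\C z_2'$, and $[V_Y,V_Z]=0$, as desired.

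The main obstacle is constructing this simultaneous decomposition for an arbitrary pair of alternating forms. My strategy would be to select $\om_1'$ and $\om_2'$ as two elements of strictly smaller rank than a generic element of the pencil $\{a\om_1+b\om_2\}$; over $\C$ such elements always exist, since the Pfaffian (in even dimension) or an analogous rank-drop invariant cuts out a nonempty subset of $\P^1$. Inside the resulting enlarged radicals I would pick maximal $\om_1'$-isotropic and $\om_2'$-isotropic subspaces to serve as $V_Y\subseteq\rad(\om_2')$ and $V_Z\subseteq\rad(\om_1')$; the transversality $\rad(\om_1')\cap\rad(\om_2')=0$, which is intrinsic and inherited from the preliminary reduction, ensures that $V_Y+V_Z$ is direct. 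Finally, the common isotropic complement $V_X$ would be built by a routine linear algebra argument exploiting the nondegenerate pairings between $V_X$ and $V_Y$ (resp.\ $V_Z$) induced by $\om_1'$ (resp.\ $\om_2'$).
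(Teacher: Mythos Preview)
Your treatment of $d\le 1$ is fine, and the reduction of $d=2$ to a statement about a pencil of alternating forms on $V$ is the right framework. However, the key step of your $d=2$ argument has a genuine gap: the claim that the pencil $\{a\om_1+b\om_2\}$ always contains \emph{two} linearly independent elements of strictly smaller than generic rank is false. In even dimension the Pfaffian of $a\om_1+b\om_2$ is a binary form which may well be a perfect power, so that the degenerate locus in $\P^1$ is a single point. Concretely, on $V=\C^4$ take $\om_1=e_1\wedge e_2+e_3\wedge e_4$ and $\om_2=e_1\wedge e_3$; then $\mathrm{Pfaff}(a\om_1+b\om_2)=a^2$, so the unique degenerate member (up to scale) is $\om_2$. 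Here $\rad(\om_1)\cap\rad(\om_2)=0$, so the example survives your preliminary reduction. With only one degenerate $\om_2'$ available you are forced to take $\rad(\om_2')$ nontrivial and $\rad(\om_1')=0$, hence $V_Z=0$; but then your scheme produces $[\Lg,\Lg]\subseteq\C z_2'$, contradicting $d=2$. The Lie algebra in this example \emph{does} admit a hexagonal grading, namely $V_X=\spa(e_1,e_4)$, $V_Y=\spa(e_2,e_3)$, $V_Z=0$ with $[V_X,V_Y]=[\Lg,\Lg]$, but this comes from a bicolouring argument rather than from radicals of pencil members.

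Even when two degenerate members do exist, the closing ``routine linear algebra argument'' is not routine: you need a complement $V_X$ to $V_Y\oplus V_Z$ that is simultaneously isotropic for both $\om_1'$ and $\om_2'$, and the dimensions need not cooperate once $V_Y,V_Z$ are chosen merely as maximal isotropics inside the radicals. A correct direct proof along your lines would require the Kronecker normal form for pencils of skew forms, which decomposes $(V,\om_1,\om_2)$ into indecomposable blocks and lets one build the partition blockwise; this is essentially what the reference \cite{BDF} does. The paper itself does not give a self-contained argument for $d=2$ but simply invokes that reference, so your attempt is more ambitious---it just needs the missing structure theory to close.
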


\begin{proof}
It is well-known that a nilpotent Lie algebra with $1$-dimensional commutator subalgebra
is isomorphic to the direct sum of the Heisenberg Lie algebra $\Lh_m$ and some abelian
Lie algebra $\C^k$. Since both summands admit a periodic derivation, so does $\Lg$.
For the case that $\dim ([\Lg,\Lg])= 2$ we can refer to theorem $1$ in \cite{BDF}.
It says that $\Lg$ admits a hexagonal grading, and hence also a periodic derivation.
\end{proof}

\begin{rem}
As example $\ref{2.13}$ with $N(2,4)/I_5$ shows, a two-step nilpotent Lie algebra $\Lg$ with 
$3$-dimensional commutator subalgebra need not admit a periodic derivation.
\end{rem}

\section{Periodic prederivations}

Lie algebra prederivations are a generalization of Lie algebra derivations.
They have been studied in connection with Lie algebra degenerations, Lie triple systems
and bi-invariant semi-Riemannian metrics on Lie groups, see \cite{BU14} and the references
given therein. 

\begin{defi}
A linear map $P: \Lg \ra \Lg$ is called a {\it prederivation} of $\Lg$ if
$$P([x,[y,z]])=[P(x),[y,z]]+[x,[P(y),z]]+[x,[y,P(z)]]$$
for every $x,y,z\in \Lg$. It is called {\it periodic}, if $P^m=\id$ for some
$m\ge 1$.
\end{defi}

If $\Lg$ has a periodic prederivation $P$ with $m=1$, then $\Lg$ is at most two-step
nilpotent. Conversely, a Lie algebra of nilpotency class at most two admits
periodic prederivations of any any possible order $m\ge 1$.
Jacobsons result of \cite{JAC} generalizes to prederivations, and even to $k$-Leibniz
derivations, see \cite{MOE}. This implies the following result.

\begin{prop}
Let $\Lg$ be a Lie algebra admitting a periodic prederivation. Then $\Lg$ is nilpotent.
\end{prop}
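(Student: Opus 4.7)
The plan is to reduce the statement immediately to the prederivation version of Jacobson's theorem. From $P^m=\id$ we read off that $P$ is invertible, and in particular nonsingular as a linear map of $\Lg$. The excerpt explicitly states that Jacobson's classical theorem, asserting that a characteristic-zero Lie algebra admitting a nonsingular derivation is nilpotent, extends to prederivations (and further to $k$-Leibniz derivations) in the work \cite{MOE}. Applying that extension to $P$ gives the conclusion.

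Concretely, the proof will consist of two lines: invertibility of $P$ from the relation $P^m=\id$, followed by a direct citation of the prederivation analogue of Jacobson's theorem in \cite{MOE}. There is nothing to compute and no case analysis; the ``hard part'' has been shifted entirely to the cited result. The only potential obstacle, if one wished to be self-contained, would be to reconstruct the generalized Jacobson argument for prederivations; this would require adapting Jacobson's Fitting-decomposition argument to the Lie triple bracket $(x,y,z)\mapsto [x,[y,z]]$ and checking that a nonsingular prederivation forces the Fitting-one component to be trivial. Since this is exactly the content of \cite{MOE}, we simply invoke it.
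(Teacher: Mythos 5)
Your proof matches the paper's own argument exactly: the paper likewise observes that a periodic prederivation is nonsingular and then invokes the generalization of Jacobson's theorem to prederivations (and $k$-Leibniz derivations) established in \cite{MOE}. No gap; this is the intended two-line reduction.
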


Consider two first examples of nilpotency class $4$, namely the filiform nilpotent 
Lie algebras $\Lg_1$ and $\Lg_2$ of dimension $5$. 
The Lie brackets of $\Lg_1$ are given by $[x_1,x_i]=x_{i+1}$ for $i=2,3,4$. For $\Lg_2$ 
there is the additional bracket $[x_2,x_3]=x_5$.

\begin{ex}
The Lie algebra $\Lg_1$ admits periodic prederivations of any possible even 
order $m\ge 2$, whereas $\Lg_2$ admits no periodic prederivations.
\end{ex}

Indeed, $P=\diag(\al,-\al,-\al,\al,\al)$ with $\al^m=(-\al)^m=1$ is a periodic
prederivation of $\Lg_1$. On the other hand, a prederivation of $\Lg_2$ has the
eigenvalues $\al,\be,2\be-\al,2\al-\be,\al+2\be$. These cannot be all $m$-th roots of
unity. \\[0.2cm]
A Lie algebra admits periodic prederivations of {\it odd} order only in a trivial way.
To show this, we need another lemma on roots of unity.

\begin{lem}\label{5.4}
There exists $m$-th roots of unity $\al,\be,\ga$ such that $\al+\be+\ga$ is again
an $m$-th root of unity if and only if $m$ is even.
\end{lem}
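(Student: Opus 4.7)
The ``if'' direction is immediate: if $m$ is even then $-1$ is an $m$-th root of unity, so the triple $(\al,\be,\ga)=(1,1,-1)$ satisfies $\al+\be+\ga=1$, which is also an $m$-th root of unity. For the converse, suppose $\al+\be+\ga=\de$ with $\al,\be,\ga,\de$ all $m$-th roots of unity. Dividing through by $\de$, I reduce to the problem of showing that if $m$-th roots of unity $a,b,c$ satisfy $a+b+c=1$, then $m$ is even. My plan is to exhibit two of $a,b,c$ that differ by a factor of $-1$: once that is done, $-1$ is a ratio of two $m$-th roots of unity, hence an $m$-th root of unity, and so $m$ is even.

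Taking the modulus squared of $a+b+c=1$ and using $|a|=|b|=|c|=1$ yields
$$\Re(a\bar b)+\Re(b\bar c)+\Re(a\bar c)=-1.$$
Writing $u=a\bar b=e^{i\theta}$ and $v=b\bar c=e^{i\phi}$, one has $a\bar c=uv=e^{i(\theta+\phi)}$, so the relation becomes $\cos\theta+\cos\phi+\cos(\theta+\phi)=-1$. I would then invoke the product formula
$$1+\cos\theta+\cos\phi+\cos(\theta+\phi)=4\cos(\theta/2)\cos(\phi/2)\cos((\theta+\phi)/2),$$
a straightforward consequence of two applications of the sum-to-product identity. Its right-hand side must vanish, so one of $\cos(\theta/2),\cos(\phi/2),\cos((\theta+\phi)/2)$ equals zero, forcing $u=-1$, $v=-1$, or $uv=-1$. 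In any case, one of the quotients $a/b$, $b/c$, $a/c$ equals $-1$, completing the argument.

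The main obstacle is to collapse the three possible ``which pair is antipodal'' cases into a single symmetric statement; the trigonometric factorization above does this cleanly. An equally appealing alternative is to observe that the four unit-modulus numbers $\{1,-a,-b,-c\}$ sum to zero and, by conjugating the equation and using $\bar z = z^{-1}$, also have reciprocals summing to zero, so their monic characteristic polynomial is biquadratic; hence the four values split into two antipodal pairs on the unit circle, which again supplies the required factor of $-1$.
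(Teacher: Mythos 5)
Your proof is correct, and its skeleton is the same as the paper's: reduce to four unit-modulus numbers summing to zero and show that two of them must be antipodal, which puts $-1$ in the group of $m$-th roots of unity. The paper's own argument is essentially the one you sketch as an \emph{alternative} at the end: it writes $\al+\be+\ga+\de=0$ with $\de=-(\al+\be+\ga)$, views this as a closed quadrilateral with four unit sides, and simply asserts that such a rhombus has two sides pointing in opposite directions, ``say $\be=-\al$''. Your main argument establishes that antipodal pair rigorously instead of geometrically: after normalizing to $a+b+c=1$ you derive $\cos\theta+\cos\phi+\cos(\theta+\phi)=-1$ and invoke the factorization $1+\cos\theta+\cos\phi+\cos(\theta+\phi)=4\cos(\theta/2)\cos(\phi/2)\cos((\theta+\phi)/2)$, whose vanishing forces one of $a/b$, $b/c$, $a/c$ to equal $-1$; this also makes the three-way case distinction (which pair is antipodal) explicit, where the paper hides it behind ``say''. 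A further small gain: the paper writes $\de^m=1$, although from the hypothesis one only knows $\de^m=(-1)^m$; your normalization by the root of unity $\al+\be+\ga$ itself, rather than by its negative, sidesteps this imprecision entirely. In short, same idea, but your version supplies the justification the paper leaves to geometric intuition.
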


\begin{proof}
If $m$ is even then we may take $\al=\be=-\ga=1$. Conversely, suppose that 
$\al^m=\be^m=\ga^m=\de^m=1$ with $\de=-(\al+\be+\ga)$ and $\al+\be+\ga+\de=0$.
We obtain a rhombus, so that two sides are vectors of opposite direction, say
$\be=-\al$. Then $1=\be^m=(-\al)^m=(-1)^m$ and hence $m$ is even.
\end{proof}

\begin{prop}
A Lie algebra $\Lg$ admits a periodic prederivation of odd order if and only if 
$\Lg$ it is nilpotent of class at most two.
\end{prop}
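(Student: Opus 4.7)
The easy direction is the ``if'': if $\Lg$ is nilpotent of class at most two, then $[x,[y,z]]=0$ for every $x,y,z\in\Lg$, so the prederivation identity reduces to $0=0$ and \emph{every} linear map on $\Lg$ is a prederivation. In particular $P=\id$ is a periodic prederivation of order $1$, which is odd. (Alternatively, one can take any diagonal map with odd-order roots of unity on the diagonal.)

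For the ``only if'' direction, let $P$ be a periodic prederivation of odd order $m$. Since $P^m=\id$, $P$ is semisimple and all its eigenvalues are $m$-th roots of unity, so $\Lg$ decomposes as a direct sum of eigenspaces $\Lg=\bigoplus_{\zeta^m=1}\Lg_{\zeta}$. The plan is to suppose that $[\Lg,[\Lg,\Lg]]\neq 0$ and then derive a contradiction using Lemma~\ref{5.4}.

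Under this assumption, there exist $x,y,z\in\Lg$ with $[x,[y,z]]\neq 0$. Writing each of $x,y,z$ as a sum of eigenvectors of $P$ and using trilinearity of the iterated bracket, at least one triple of eigenvectors must have nonzero nested bracket. So we can find eigenvectors $x,y,z$ of $P$ with eigenvalues $\al,\be,\ga$ (all $m$-th roots of unity) such that $0\neq w:=[x,[y,z]]$. The prederivation identity then gives
\[
P(w)=[P(x),[y,z]]+[x,[P(y),z]]+[x,[y,P(z)]]=(\al+\be+\ga)\,w,
\]
so $w$ is an eigenvector of $P$ with eigenvalue $\al+\be+\ga$. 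Because $P^m=\id$ and $w\neq 0$, this eigenvalue $\al+\be+\ga$ must itself be an $m$-th root of unity.

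We have thus produced $m$-th roots of unity $\al,\be,\ga$ with $\al+\be+\ga$ also an $m$-th root of unity. By Lemma~\ref{5.4} this forces $m$ to be even, contradicting our assumption. Hence $[\Lg,[\Lg,\Lg]]=0$, i.e.\ $\Lg$ is nilpotent of class at most two. The only real subtlety is the eigenvector selection step, and that is just a multilinearity argument; everything else is a direct application of Lemma~\ref{5.4} in the same spirit as the proof of Proposition~\ref{2-step}.
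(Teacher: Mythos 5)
Your proof is correct and follows essentially the same route as the paper: semisimplicity of a finite-order map, selection of eigenvectors $x,y,z$ with $[x,[y,z]]\neq 0$, the eigenvalue computation $\al+\be+\ga$ via the prederivation identity, and the contradiction with Lemma~\ref{5.4} for odd $m$. The only difference is that you spell out the easy ``if'' direction and the multilinearity selection step, which the paper leaves implicit.
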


\begin{proof}
Assume that $[\Lg,[\Lg,\Lg]]\neq 0$ and $P^m=\id$ for some odd $m\ge 1$. Then $P$ is 
semisimple, and there exist eigenvectors $x,y,z$ with eigenvalues $\al,\be,\ga$
such that $[x,[y,z]]\neq 0$. In particular,  $[x,[y,z]]$ is an eigenvector with eigenvalue
$\al+\be+\ga$. Then $\al,\be,\ga$ and $\al+\be+\ga$ are $m$-th roots of unity.
This contradicts lemma $\ref{5.4}$ since $m$ is odd. It follows that  $[\Lg,[\Lg,\Lg]]=0$.
\end{proof}

\begin{rem}
Let $\Lg$ be a Lie algebra admitting a periodic prederivation $P$. Then it is not
difficult to show that the inverse $P^{-1}$ is again a prederivation of $\Lg$.
On the other hand, if $\Lg$ admits a nonsingular prederivation $P$, such that
$P^{-1}$ is again a prederivation, we do not know whether or not this implies the existence
of a periodic prederivation. Possibly theorem $\ref{3.10}$ can be extended to prederivations.
\end{rem}

We introduce the following class of Lie algebras.

\begin{defi}
A Lie algebra $\Lg$ is called an {\it pre-Engel-$m$} Lie algebra, if
the subspace $E_m(\Lg)=\s \{x\in \Lg\mid \ad(x)^m=0 \}$ has full dimension, i.e., 
if $\dim (E_m(\Lg))=\dim (\Lg)$.
\end{defi}

In other words, $\Lg$ is a pre-Engel-$m$ Lie algebra, if it admits an ad-nilpotent
basis of degree $m$. A special class if given by Engel-$m$ Lie algebras. These are
Lie algebras $\Lg$, where $\ad(x)^m=0$ holds for all $x\in \Lg$. They have received
a lot of attention in the literature, in particular concerning the possible nilpotency
class of such algebras. For $m\ge 5$ this is a difficult question. For $m=4$ the answer
is, that an Engel-$4$ Lie algebra has nilpotency class at most $7$, and there are such
examples. For details see \cite{TRA} and the references given therein.

\begin{rem}
Note that a simple Lie algebra may have an ad-nilpotent basis, e.g., $\Lg=\Ls\Ll_2(\C)$
with $e_1=\left(\begin{smallmatrix} 0 & 1 \\ 0 & 0 \end{smallmatrix}\right)$, 
$e_2=\left(\begin{smallmatrix} 0 & 0 \\ 1 & 0 \end{smallmatrix}\right)$ and
$e_3=\left(\begin{smallmatrix} \phantom{-}1 & \phantom{-}1 \\ -1 & -1 \end{smallmatrix}\right)$.
However, here we restrict ourself to nilpotent Lie algebras.
\end{rem}

We are in particular interested in nilpotent pre-Engel-$4$ Lie algebras, because
they are important for the existence of periodic prederivations. Clearly, any Lie algebra
of nilpotency class $c\le 4$ is a pre-Engel-$4$ Lie algebra. Thus it is more interesting
to consider pre-Engel-$4$ Lie algebras of nilpotency class $c\ge 5$. For the filiform
nilpotent case and the free-nilpotent case it is easy to obtain a classification.

\begin{prop}
A filiform Lie algebra $\Lg$ of nilpotency class $c\ge 5$ is a pre-Engel-$4$ Lie algebra
if and only if $\Lg$ is $6$-dimensional and $2$-step solvable.
\end{prop}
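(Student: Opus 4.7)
The approach will be to work in a filiform-adapted basis $x_1,\ldots,x_n$ with $[x_1,x_i]=x_{i+1}$ for $2\le i\le n-1$, writing a generic element as $v=\lambda_1x_1+\lambda_2x_2+w$ with $w\in [\Lg,\Lg]$. Since $\dim \Lg/[\Lg,\Lg]=2$ for a filiform algebra, any ad-$4$-nilpotent basis of $\Lg$ must contain a vector with $\lambda_1\neq 0$, so the whole argument reduces to understanding when such a $v$ can satisfy $\ad(v)^4=0$.

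For the forward direction, I would compute $\ad(v)^k(x_2)$ and $\ad(v)^k(x_1)$ inductively for $k\le 4$, tracking how each application of $\ad(v)$ shifts the weight of its argument by one. Denoting by $a_4,b_5,c_6,d_6$ the structure constants governing $[x_2,x_j]$ and $[x_3,x_4]$ at leading weight, the top weight component of $\ad(v)^4(x_2)$, which lives in $\Lg^5/\Lg^6$, is a polynomial $P(\lambda_1,\lambda_2)\cdot x_6$ whose factorization depends on these constants. For $n\ge 7$ the vector $\ad(v)^4(x_2)$ also has nonzero components in $\Lg^6/\Lg^7,\ldots,\Lg^{n-1}$; these give further polynomial conditions that, combined with the analogous ones from $\ad(v)^4(x_1)$, admit only the trivial solution $\lambda_1=0$, contradicting pre-Engel-$4$. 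Hence $n=6$. In that case only the single weight-$5$ obstruction remains, and combining it with the Jacobi relation $a_4=c_6+d_6$ (obtained in passing when classifying $6$-dimensional filiforms) together with the requirement that $E_4(\Lg)$ be full-dimensional, one concludes that $d_6$ must vanish, i.e., $[x_3,x_4]=0$, so $\Lg$ is $2$-step solvable.

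For the reverse direction, I observe that $2$-step solvability makes $[\Lg,\Lg]$ abelian, so for every $w\in [\Lg,\Lg]$ the chain $\ad(w)(\Lg)\subseteq [\Lg,\Lg]$ and $\ad(w)^2(\Lg)\subseteq [[\Lg,\Lg],[\Lg,\Lg]]=0$ gives $\ad(w)^2=0$. Hence $x_3,x_4,x_5,x_6\in E_4(\Lg)$ automatically, supplying four ad-$4$-nilpotent basis vectors. For the remaining two directions of $\Lg/[\Lg,\Lg]$, I would adjust $x_1$ and $x_2$ by suitable elements of $[\Lg,\Lg]$, exploiting the fact that in dimension $6$ the obstruction lives in the one-dimensional space $\Lg^5=\langle x_6\rangle$ and is governed by a single polynomial in $\lambda_1,\lambda_2$; this leaves enough freedom to produce $\tilde x_1,\tilde x_2$ satisfying $\ad(\tilde x_i)^4=0$ with linearly independent images in $\Lg/[\Lg,\Lg]$.

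The main obstacle will be the forward direction for $n\ge 7$. Keeping track of how the subleading weight components of $\ad(v)^k(x_j)$ depend on the full list of filiform structure constants, and then showing that the resulting polynomial system in $\lambda_1,\lambda_2$ has no nontrivial solution, requires a careful use of the Jacobi identities to close the system and to rule out accidental cancellations between different weight layers.
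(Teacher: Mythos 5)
Your overall strategy -- pass to an adapted basis, show that for $n\ge 7$ every ad-$4$-nilpotent element has vanishing $x_1$-component so that $E_4(\Lg)$ is a proper subspace, and then settle dimension $6$ by a direct computation -- is the same as the paper's. The genuine problem lies in the dimension-$6$ endgame, in both directions. In the reverse direction you propose to adjust $x_1$ and $x_2$ by elements of $[\Lg,\Lg]$ until they become ad-$4$-nilpotent. But the obstruction you yourself identify, the coefficient of $x_6$ in $\ad(v)^4(x_2)$, is a polynomial in $\lambda_1,\lambda_2$ alone, i.e.\ it depends only on the class of $v$ modulo $\spa\{x_2,\ldots,x_6\}\supseteq[\Lg,\Lg]$; adjusting by elements of $[\Lg,\Lg]$ therefore gives no freedom whatsoever. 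Concretely, take the model filiform algebra of dimension $6$ with only the brackets $[x_1,x_i]=x_{i+1}$ for $2\le i\le 5$; it is $2$-step solvable. For $v=\sum_i\lambda_ix_i$ one has $\ad(v)(x_j)=\lambda_1x_{j+1}$ for $j\ge 2$, hence $\ad(v)^4(x_2)=\lambda_1^4x_6$, so every ad-$4$-nilpotent element has $\lambda_1=0$ and $E_4(\Lg)=\spa\{x_2,\ldots,x_6\}$ has dimension $5$. No choice of $\tilde x_1$ can repair this, so the reverse direction cannot be completed for this algebra.

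The forward endgame is governed by the same computation, but it comes out with the opposite sign from what you assert. Normalize the adapted basis so that $[x_2,x_3]\in\spa\{x_5,x_6\}$ (replace $x_2$ by $x_2-cx_1$ if $[x_2,x_3]$ has an $x_4$-component $c$) and write $[x_3,x_4]=\gamma x_6$, whence $[x_2,x_4]\in\spa\{x_6\}$ and $[x_2,x_5]=-\gamma x_6$ by Jacobi. Tracking the weight-by-one steps then gives, as the only obstructions, $\ad(v)^4(x_2)=\lambda_1^3(\lambda_1-\gamma\lambda_2)\,x_6$ and $\ad(v)^4(x_1)=-\lambda_2\lambda_1^2(\lambda_1-\gamma\lambda_2)\,x_6$. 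Consequently $E_4(\Lg)$ is full-dimensional if and only if $\gamma\neq 0$, that is, if and only if $[\Lg,\Lg]$ is \emph{not} abelian: for instance, in the algebra with additional brackets $[x_2,x_5]=x_6$ and $[x_3,x_4]=-x_6$ the element $x_1-x_2$ satisfies $\ad(x_1-x_2)^4=0$ and, together with $x_2,\ldots,x_6$, spans $\Lg$. So your step ``hence $d_6$ must vanish'' should read ``hence $d_6$ must \emph{not} vanish'', and the classification your computation actually yields is the two $3$-step solvable algebras rather than the three $2$-step solvable ones. Since this also conflicts with the statement being proved, you should recheck the dimension-$6$ case carefully before investing in the $n\ge 7$ bookkeeping; as written, the proposal does not establish the stated equivalence.
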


\begin{proof}
We have $\dim (\Lg)\ge 6$ because of $c\ge 5$. Let $\Lg$ be a filiform nilpotent
Lie algebra of dimension $n\ge 7$ and consider an adpated basis $e_1,\ldots ,e_n$, in the sense of 
Vergne. In particular we have $\ad (e_1)^{n-3}\neq 0$ and $\ad(e_1)^{n-2}=0$.
Let $x=\sum_{i=1}^n \la_ie_i$ and assume that $\ad (x)^4=0$. Then it is easy to see that
we always obtain $\la_1=0$. Here we need $n\ge 7$. This implies that 
$E_4(\Lg)\subseteq \s \{e_2,\ldots ,e_n\}$, so that $\Lg$ cannot be a pre-Engel-$4$ Lie 
algebra. Hence we are left with the case $\dim (\Lg)=6$ and $c=5$, where we have $5$ different 
algebras. Two of them are $3$-step solvable and three of them are $2$-step solvable.
The result follows by an easy computation.
\end{proof}

\begin{prop}
The free-nilpotent Lie algebra $N(c,g)$ is a pre-Engel-$4$ Lie algebra if and only if
$c\le 4$.
\end{prop}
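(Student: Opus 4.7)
The plan is to split into the two directions, with the easy direction using only the definition of nilpotency and the hard direction exploiting the canonical grading of the free nilpotent Lie algebra.

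For the forward implication, assume $c\le 4$. Then for every $x,y\in N(c,g)$ the element $\ad(x)^{4}(y)=[x,[x,[x,[x,y]]]]$ is a bracket of length $5$ and hence lies in the fifth term of the lower central series, which is zero. Thus $\ad(x)^{4}=0$ for every $x$, so in fact $N(c,g)$ is an Engel-$4$ Lie algebra, and in particular $E_4(N(c,g))=N(c,g)$.

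For the converse, suppose $c\ge 5$; we may assume $g\ge 2$ since $N(c,1)$ is abelian. I would like to show that every element $x$ with $\ad(x)^{4}=0$ must lie in $[N(c,g),N(c,g)]$, which is a proper subspace. The tool is the natural grading $N(c,g)=\bigoplus_{k=1}^{c} N(c,g)_k$, where $N(c,g)_k$ is the span of Lie monomials of length $k$ in the generators $x_1,\ldots,x_g$. Write $x=x_{(1)}+x_{(\ge 2)}$ with $x_{(1)}\in N(c,g)_1$. The key claim is that if $x_{(1)}\neq 0$ then $\ad(x)^{c-1}\neq 0$, which since $c-1\ge 4$ forces $\ad(x)^{4}\neq 0$.

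To prove this claim, I would first observe the degree-counting identity
\[
\ad(x)^{c-1}\big|_{N(c,g)_1} \;=\; \ad(x_{(1)})^{c-1}\big|_{N(c,g)_1},
\]
because any summand in the expansion of $\ad(x)^{c-1}$ that uses a factor $\ad(x_{(\ge 2)})$ produces total degree strictly greater than $c$ when applied to an element of degree $1$, and so vanishes in $N(c,g)$. It then remains to find a generator-level element on which $\ad(x_{(1)})^{c-1}$ is nonzero. Since $x_{(1)}$ is a nonzero element of $N(c,g)_1$, we can extend it to a basis $x_{(1)}=y_1,y_2,\ldots,y_g$ of $N(c,g)_1$, and the universal property of the free nilpotent Lie algebra yields a graded automorphism $\phi$ of $N(c,g)$ with $\phi(x_i)=y_i$. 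The classical fact that the left-normed bracket $[x_1,[x_1,\ldots,[x_1,x_2]\ldots]]$ of length $c$ is a nonzero Hall-basis element of $N(c,g)_c$ (this is where $g\ge 2$ is used) then gives
\[
\ad(x_{(1)})^{c-1}(y_2) \;=\; \phi\bigl([x_1,[x_1,\ldots,[x_1,x_2]\ldots]]\bigr)\;\neq\; 0,
\]
establishing the claim. Contrapositively, every $x\in E_4(N(c,g))$ satisfies $x_{(1)}=0$, so
\[
E_4(N(c,g)) \;\subseteq\; \bigoplus_{k\ge 2} N(c,g)_k \;=\; [N(c,g),N(c,g)],
\]
which is a proper subspace, and therefore $N(c,g)$ is not pre-Engel-$4$.

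The main obstacle is the nonvanishing of the left-normed bracket $[x_1,[x_1,\ldots,[x_1,x_2]\ldots]]$ of length $c$ in $N(c,g)$ for $g\ge 2$; everything else is bookkeeping with the grading. I would invoke this as a standard consequence of Hall's theorem on bases of free nilpotent Lie algebras rather than prove it from scratch.
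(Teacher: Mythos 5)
Your proof is correct. The paper's argument for the hard direction is the same in spirit but packaged differently: it takes an \emph{arbitrary} basis $e_1,\ldots,e_n$ of $N(c,g)$, extracts the $g$ elements projecting to a basis of the abelianization, and uses the automorphism $x_i\mapsto e_i$ to transport the nonvanishing of $\ad(x_i)^4(x_j)$ directly to $\ad(e_i)^4(e_j)\neq 0$, so that every basis contains an element outside $E_4$. You instead prove the sharper containment $E_4(N(c,g))\subseteq [N(c,g),N(c,g)]$, using the natural grading to discard the higher-degree part of $x$ and applying an automorphism only to the degree-one component $x_{(1)}$. Both arguments ultimately rest on the same classical fact that $\ad(x_1)^{c-1}(x_2)\neq 0$ in $N(c,g)$ for $g\ge 2$; your degree-counting step is an extra ingredient the paper avoids by applying the automorphism to the whole basis element rather than to its leading term, while in return your version yields the cleaner structural statement that the ad-nilpotent elements of degree $4$ all lie in the commutator subalgebra. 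One cosmetic point: the bracket $[x_1,[x_1,\ldots,[x_1,x_2]\ldots]]$ as written is right-normed rather than left-normed, but it equals $\pm\ad(x_1)^{c-1}(x_2)$, a basic commutator, so the appeal to Hall bases goes through.
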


\begin{proof}
We may assume that $g\ge 2$. Suppose that $c\ge 5$ and denote the generators by $x_1,\ldots,
x_g$. Let $e_1,\ldots ,e_n$ be {\it any} basis of $\Lf=N(c,g)$. It contains a subset which is a basis
for $\Lf/[\Lf,\Lf]$. We may assume that this subset is given by $\{ e_1,\ldots ,e_g\}$.
Define a Lie algebra automorphism $\al\colon \Lf\ra \Lf$ by $\al(x_i)=e_i$ for all $1\le i\le g$
(see \cite{GAU}). Therefore the identity $\ad (e_i)^t(e_j)=0$
for $i<j$ is equivalent to the identity $\ad (x_i)^t(x_j)=0$ for $i<j$, which holds if and only if 
$t\ge c$. It follows that $\ad (e_i)^4(e_j)\neq 0$ for all $1\le i<j \le g$. Hence 
$N(c,g)$ is not a pre-Engel-$4$ Lie algebra for $c\ge 5$.
\end{proof}

Pre-Engel-$4$ Lie algebras are related to the existence of periodic prederivations as follows.

\begin{prop}\label{5.10}
Let $\Lg$ be a nilpotent Lie algebra which is not a pre-Engel-$4$ Lie algebra.
Then $\Lg$ does not admit a periodic derivation.
\end{prop}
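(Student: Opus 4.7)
The plan is to prove the contrapositive: if $\Lg$ admits a periodic prederivation, then $\Lg$ is a pre-Engel-$4$ Lie algebra.

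Suppose $P$ is a periodic prederivation with $P^m=\id$. The minimal polynomial of $P$ divides $t^m-1$, which has distinct roots over $\C$, so $P$ is semisimple. Fix an eigenbasis $e_1,\ldots,e_n$ of $\Lg$ with respective eigenvalues $\al_1,\ldots,\al_n$, each an $m$-th root of unity.

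The prederivation identity forces every nonzero triple bracket of eigenvectors to be itself an eigenvector: if $x,y,z$ are eigenvectors with eigenvalues $\la,\mu,\nu$, then $P([x,[y,z]])=(\la+\mu+\nu)[x,[y,z]]$. I would apply this twice to the fourfold bracket $\ad(e_i)^4(e_j)=[e_i,[e_i,[e_i,[e_i,e_j]]]]$. Setting $u:=[e_i,[e_i,e_j]]$, either $u=0$ (and we are done) or $u$ is an eigenvector with eigenvalue $2\al_i+\al_j$; in the latter case $\ad(e_i)^4(e_j)=[e_i,[e_i,u]]$ is again a triple bracket of eigenvectors, so if it is nonzero it is an eigenvector of $P$ with eigenvalue $4\al_i+\al_j$, which must itself be an $m$-th root of unity. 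But the reverse triangle inequality gives $|4\al_i+\al_j|\ge 4|\al_i|-|\al_j|=3>1$, a contradiction.

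Therefore $\ad(e_i)^4(e_j)=0$ for all $i,j$, so each $\ad(e_i)^4$ vanishes on the entire basis and hence on $\Lg$. This places every $e_i$ in $E_4(\Lg)$, forcing $E_4(\Lg)=\Lg$, contrary to the assumption that $\Lg$ is not pre-Engel-$4$. The only real subtlety is that the prederivation identity directly governs only triple brackets, whereas $\ad(e_i)^4(e_j)$ is a fourfold bracket; the point is to regroup it as a triple bracket involving the already-established eigenvector $u$, after which the conclusion is a one-line triangle-inequality estimate.
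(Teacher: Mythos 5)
Your proposal is correct and follows essentially the same route as the paper: pass to an eigenbasis of the semisimple periodic prederivation and rule out a nonzero $\ad(e_i)^4(e_j)$ because its eigenvalue $4\al_i+\al_j$ would have modulus at least $3$, hence cannot be a root of unity. Your extra step of regrouping the fourfold bracket as the triple bracket $[e_i,[e_i,u]]$ with $u=[e_i,[e_i,e_j]]$ to justify that it is an eigenvector is a welcome precision that the paper's proof leaves implicit.
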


\begin{proof}
Assume that $\Lg$ admits a periodic prederivation $P$. Then $P$ is semisimple, and $\Lg$ 
admits a basis of eigenvectors $x_1,\ldots ,x_n$ with corresponding eigenvalues
$\al_1,\ldots ,\al_n$ of absolute value one. By assumption every basis $x_1,\ldots ,x_n$ 
contains an element $x_i$ with $\ad (x_i)^4\neq 0$. Hence $y=[x_i,[x_i,[x_i,[x_i,x_j]]]]$
is nonzero for two eigenvectors $x_i$ and $x_j$, and $y$ is again an eigenvector with
$P(y)=(4\al_i+\al_j)y$. Since $P$ is periodic, $\abs{4\al_i+\al_j}=1$. This is a contradiction
to $\abs{\al_i}=\abs{\al_j}=1$.
\end{proof}

\begin{cor}\label{5.11}
Let $\Lg$ be a filiform Lie algebra of nilpotency class $c\ge 5$. Then $\Lg$ does not 
admit a periodic prederivation.
\end{cor}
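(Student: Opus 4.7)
The plan is to combine the preceding two propositions and then finish off a small remaining list of cases by a direct eigenvalue computation.

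First, by Proposition \ref{5.10}, a nilpotent Lie algebra which is not pre-Engel-$4$ admits no periodic prederivation. According to the proposition just before Proposition \ref{5.10}, a filiform Lie algebra $\Lg$ of nilpotency class $c\ge 5$ is pre-Engel-$4$ precisely when $\dim(\Lg)=6$ and $\Lg$ is $2$-step solvable. Thus the corollary is immediate except when $\Lg$ is one of the three $2$-step solvable filiform Lie algebras of dimension $6$ and class $5$ identified in the previous proof.

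For each of these three algebras, I would fix an adapted Vergne basis $e_1,\ldots,e_6$ and compute the general semisimple prederivation $P$ in that basis. Since a periodic prederivation is semisimple, it is enough to parametrize the diagonal ones. A short verification using the Leibniz identity for prederivations forces $P(e_1)=\al e_1$ and $P(e_2)=\be e_2$, and then the defining brackets together with the prederivation identity determine the remaining eigenvalues. In every case one obtains a list that contains, up to relabelling, the scalars
\[
\al,\; \be,\; \al+\be,\; 2\al+\be.
\]

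Now suppose all eigenvalues of $P$ were $m$-th roots of unity for some $m\ge 1$. In particular $\al$, $\be$ and $\al+\be$ have absolute value one, so by Lemma \ref{omega} we have $\be=\om\al$ for a primitive third root of unity $\om$. But then
\[
\abs{2\al+\be}=\abs{\al}\cdot\abs{2+\om}=\sqrt{3}\neq 1,
\]
a contradiction. Hence none of the remaining algebras admits a periodic prederivation, and the corollary follows.

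The main obstacle is the bookkeeping for the three exceptional algebras: one must verify that in each of them the eigenvalue list indeed contains a subpattern of the form $\al,\be,\al+\be,2\al+\be$ (or a similarly rigid sequence producing an absolute value $\sqrt{3}$ or $2$). Once the brackets are written out in an adapted basis this is a routine if slightly tedious computation, and the contradiction in the last paragraph applies uniformly.
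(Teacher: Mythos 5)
Your proof follows the paper's argument exactly: reduce via Proposition \ref{5.10} and the pre-Engel-$4$ classification of filiform algebras to the three $2$-step solvable six-dimensional exceptions, then rule those out by a direct eigenvalue computation for a semisimple periodic prederivation. The only caveat is that your specific pattern $\al,\be,\al+\be,2\al+\be$ need not be the one that actually occurs in each case (for the model filiform algebra the forced chain is rather $\al,\be,2\al+\be,4\al+\be$, giving $\abs{3\al}=3$ after $\be=-\al$), but as you yourself note the contradiction comes from the same rigidity mechanism, which is precisely the ``verified directly'' step the paper leaves to the reader.
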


\begin{proof}
We have $\dim (\Lg)\ge 6$ because of $c\ge 5$. All such filiform Lie algebra, except
for three algebras of dimension $6$ are not pre-Engel-$4$ Lie algebras. Hence they
do not admit a periodic prederivation by proposition $\ref{5.10}$. For the remaining
three algebras the claim can be verified directly.
\end{proof}

\begin{cor}\label{free-n}
The free-nilpotent Lie algebra $N(c,g)$ with $c\ge 5$ does not admit a periodic prederivation.
\end{cor}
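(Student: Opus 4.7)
The plan is to derive this corollary immediately from the two preceding results. First I would note that the statement is vacuous for $g = 1$, since $N(c,1)$ is one-dimensional and cannot have nilpotency class $c \ge 5$; so we may assume $g \ge 2$.

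Next, I would invoke the proposition immediately preceding the corollary, which states that $N(c,g)$ is a pre-Engel-$4$ Lie algebra if and only if $c \le 4$. Thus, under the hypothesis $c \ge 5$, the free-nilpotent Lie algebra $N(c,g)$ is \emph{not} pre-Engel-$4$.

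Finally, since $N(c,g)$ is nilpotent and fails to be pre-Engel-$4$, Proposition \ref{5.10} (applied to prederivations, as in its proof) yields that $N(c,g)$ admits no periodic prederivation, which is what we wanted.

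There is really no obstacle here: the corollary is just the chaining of the two preceding propositions, and the only minor point to keep track of is the trivial case $g = 1$, which is ruled out by the hypothesis on the nilpotency class.
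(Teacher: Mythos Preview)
Your proposal is correct and matches the paper's intended argument: the corollary is stated without proof in the paper, being an immediate consequence of the proposition that $N(c,g)$ is pre-Engel-$4$ if and only if $c\le 4$ together with Proposition~\ref{5.10} (whose statement contains the typo ``derivation'' for ``prederivation'', as you noted). Your handling of the trivial case $g=1$ is a harmless extra observation.
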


At this point there is the interesting question whether the above result can be generalized to
all nilpotent Lie algebras of class $c\ge 5$. In low dimensions the answer is positive. Indeed,
such algebras only exist in dimensions $n\ge 6$, and for $n=6$ they are filiform.
The first interesting dimension then is seven.

\begin{prop}
Let $\Lg$ be a nilpotent Lie algebra of dimension $7$ and nilpotency class $c\ge 5$.
Then $\Lg$ does not admit a periodic prederivation.
\end{prop}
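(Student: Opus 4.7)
The plan is to dispose of the easier subcase $c=6$ first. If $c=6$ and $\dim\Lg=7$, a standard dimension count on the lower central series shows that $\Lg$ must be filiform: strict inclusions force $\dim\Lg^i/\Lg^{i+1}\geq 1$ for $i\geq 2$, while $\dim\Lg/[\Lg,\Lg]\geq 2$ since $\Lg$ is nilpotent of dimension $\geq 2$, so the dimensions summing to $7$ over six indices must be $(2,1,1,1,1,1)$. Thus $\Lg$ is filiform of dimension $7$, and Corollary $\ref{5.11}$ already rules out a periodic prederivation. So from now on we may assume $c=5$.

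For $c=5$ and $\dim\Lg=7$, I would invoke the existing classification of $7$-dimensional complex nilpotent Lie algebras (Seeley, Gong) to reduce to a finite list, organized by the shape $(d_1,\ldots,d_5)$ of the lower central series with $d_1\geq 2$, $d_i\geq 1$ and $\sum d_i=7$. For every algebra on this list that fails to be pre-Engel-$4$, Proposition $\ref{5.10}$ closes the case at once.

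It remains to rule out periodic prederivations on the pre-Engel-$4$ algebras of class $5$ and dimension $7$. Suppose $P$ is such a prederivation. Then $P$ is semisimple, so fix an eigenbasis $x_1,\ldots,x_7$ of $\Lg$ with eigenvalues $\al_i$ of absolute value one. Because $\Lg^5\neq 0$, there is a nested bracket of length $5$ in the $x_i$ that does not vanish, and the Jacobi identity allows us to extract several simultaneously nonzero nested Lie monomials of lengths $\leq 5$. Each such monomial is a $P$-eigenvector whose eigenvalue is the corresponding partial sum of the $\al_i$'s, and so must again be a root of unity. Combining these constraints with Lemma $\ref{omega}$ and Corollary $\ref{roots1}$ forces collisions among the $\al_i$'s that collapse one of the chosen brackets to zero, contradicting $c=5$.

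The main obstacle is this last step. Unlike in the filiform case (Corollary $\ref{5.11}$) or the free-nilpotent case (Corollary $\ref{free-n}$), the pre-Engel-$4$ algebras of class $5$ and dimension $7$ do not all succumb to a single uniform root-of-unity argument: for each one, a specific family of structural brackets must be singled out whose associated eigenvalue sums over-determine the $\al_i$'s. Identifying those brackets, in the spirit of the computation behind Example $\ref{2.13}$, is the delicate case-by-case work, after which the combinatorics of roots of unity reduce to (sharper variants of) the lemmas of Section $2$.
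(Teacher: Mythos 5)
Your overall architecture matches the paper's: dispose of $c=6$ via filiformity and Corollary \ref{5.11}, reduce $c=5$ to the classification of $7$-dimensional nilpotent Lie algebras, kill the non-pre-Engel-$4$ members with Proposition \ref{5.10}, and then treat the remaining algebras individually. (The paper works with the list of $30$ indecomposable algebras plus one family from \cite{MAG}, handles the decomposable case $\Lg=\Lf\oplus\C$ with $\Lf$ filiform of dimension $6$ separately by direct computation, and reports that $14$ pre-Engel-$4$ algebras remain, for which it computes all prederivations explicitly.)

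However, there is a genuine gap at exactly the point you flag as ``the main obstacle.'' First, the uniform argument you sketch is not sound as stated: a prederivation $P$ is compatible only with the \emph{triple} bracket, so among your nested Lie monomials only those of odd length ($1$, $3$, $5$) are forced to be $P$-eigenvectors with eigenvalue the partial sum of the $\al_i$; a double bracket $[x_i,x_j]$, or a length-$4$ monomial, need not be an eigenvector at all. Consequently Lemma \ref{omega} and Corollary \ref{roots1}, which constrain sums of \emph{two} eigenvalues, simply do not apply here, and Example \ref{5.13} shows that no argument using only ``class $5$ plus roots of unity'' can succeed --- an $8$-dimensional class-$5$ algebra with a periodic prederivation exists. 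Second, having (correctly) conceded that the remaining pre-Engel-$4$ algebras require case-by-case treatment, you do not actually carry out any of those cases; you describe what such a computation would look like without exhibiting, for even one algebra on the list, the brackets whose eigenvalue constraints yield a contradiction. The paper's proof is at this point an explicit finite computation (all prederivations of the $14$ remaining algebras, plus the decomposable ones), and your proposal leaves precisely that computation undone, so the proof is incomplete. A smaller omission: if you quote a classification of indecomposable algebras only, the decomposable case $\Lf\oplus\C$ must be argued separately, and Corollary \ref{5.11} does not apply to it directly since $\Lf\oplus\C$ is not filiform.
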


\begin{proof}
If $c=6$, then $\Lg$ is filiform and the claim follows from corollary $\ref{5.11}$.
For $c=5$ there is a list of indecomposible algebras, consisting of $30$ single algebras 
and one family $\Lg(\la)$ of algebras (see for example \cite{MAG}). The claim is clear for 
those algebras which are not pre-Engel-$4$ Lie algebras. There remain $14$ pre-Engel-$4$ Lie 
algebras to be checked, where we have
computed all prederivations. The result is that none of these algebras admits a periodic
prederivation. For decomposible algebras we have $\Lg=\Lf\oplus \C$, where $\Lf$ is filiform
of dimension $6$. Again a direct computation gives the result.
\end{proof}

In general, however, the result of corollary $\ref{5.11}$ cannot be generalized.
We present a counter example in dimension $8$: take the free-nilpotent Lie algebra
$N(2,5)$ with $2$ generators $x_1,x_2$ and nilpotency class $c=5$, and consider the quotient
$\Lg$ given by the following Lie brackets,

\begin{align*}
x_3 & = [x_1,x_2], \\
x_4 & = [x_1, [x_1,x_2]]=[x_1,x_3], \\
x_5 & = [x_2, [x_1,x_2]]=[x_2,x_3], \\
x_6 & = [x_2,[x_1, [x_1,x_2]]]=[x_2,x_4], \\
x_6 & = [x_1,[x_2, [x_1,x_2]]]=[x_1,x_5], \\
x_7 & = [x_2,[x_2, [x_1,x_2]]]=[x_2,x_5], \\
x_8 & = [x_1,[x_2,[x_2, [x_1,x_2]]]]=[x_1,x_7], \\
x_8 & = [x_2,[x_1,[x_2, [x_1,x_2]]]]=[x_2,x_6].\\
\end{align*}

Note that this Lie algebra is a pre-Engel-$4$ Lie algebra, with $\ad (x_i)^4=0$ for
$1\le i\le 8$. However, it is not an Engel-$4$ Lie algebra because of $\ad (x_1+x_2)^4\neq 0$.

\begin{ex}\label{5.13}
The above $5$-step nilpotent Lie algebra admits periodic prederivations of every
even order $m\ge 2$.
\end{ex}

Indeed, a direct computation shows that for all $\al,\be,\ga \in \C$,
\[
P=\diag (\al,\be,\ga,2\al+\be,\al+2\be,\al+\be+\ga,2\be+\ga,2\al+3\be)
\]
is a prederivation of $\Lg$. For $\be=-\al$, $\ga=\al$ we obtain
\[
P=\diag (\al,-\al,\al,\al,-\al,\al,-\al,-\al).
\]
With $\al$ being a primitive $m$-th root of unity satisfying $(-\al)^m=1$, this prederivation is 
periodic of order $m$. \\[0.2cm]
There is another class of Lie algebras not admitting periodic prederivations.

\begin{defi}
Let $\Lg$ be a Lie algebra. We say that $\Lg$ satisfies {\it property $F$}, if
every basis contains a triple $(x_1,x_2,x_3)$ of basis elements such that for 
all $i\neq j$ in $\{1,2,3\}$ holds: $[x_i,[x_i,x_j]]\neq 0$ or $[x_j,[x_j,x_i]]\neq 0$.
\end{defi}

For a given Lie algebra, it seems difficult to check this for every basis.
Therefore it is more convenient to rewrite the definition as follows.
A Lie algebra $\Lg$ does {\it not} satisfy property $F$ if it has a basis $x_1,\ldots ,x_n$
such that for all triples $(x_i,x_j,x_k)$ there exists a subset $\{x_{\ell},x_m\}\subseteq
\{ x_i,x_j,x_k\}$ such that 
\[
[x_{\ell},[x_{\ell},x_m]]=[x_m,[x_m,x_{\ell}]]=0.
\]

\begin{ex}
The Lie algebra of example $\ref{5.13}$ does not have property $F$.
\end{ex}

To see this, we need to find a basis $y_1,\ldots ,y_8$ such that for each of
the possible $\binom{8}{3}=56$ triples $(y_i,y_j,y_k)$ there is a pair $(y_{\ell},y_m)$ such that
$\ad(y_{\ell})^2(y_m)=\ad(y_{m})^2(y_{\ell})=0$. It turns out that we may choose the original
basis $x_1,\ldots ,x_8$ of $\Lg$ together with the pairs  $(x_{\ell},x_m)$ for
$(\ell,m)=(1,2),(1,3),(2,3),(1,4),(2,4),(3,4),$ $(5,6),(5,7),(5,8),(6,7),(6,8),(7,8)$.

\begin{prop}\label{5.16}
The free-nilpotent Lie algebra $N(c,g)$ has property $F$ if and only if either $c\ge 3,g\ge 3$ 
or $c\ge 4,g=2$.
\end{prop}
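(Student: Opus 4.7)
The plan is to split the argument into cases on $(c,g)$ and exploit the homogeneous grading of $\Lf := N(c,g)$ by degree of the generators. I first dispose of the easy failures: if $c \le 2$, then $\Lf$ is at most two-step nilpotent, so $[x,[x,y]] = 0$ for every pair in every triple and property $F$ fails; if $g \le 1$, then $\Lf$ has dimension at most one and contains no triple of basis elements, so property $F$ fails vacuously. This leaves the forward direction in the stated cases and the single nontrivial failure $\Lf = N(3,2)$.

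For the forward direction, assume $(c \ge 3, g \ge 3)$ or $(c \ge 4, g = 2)$, and let $B = \{e_1, \dots, e_n\}$ be any basis. Some $g$-subset of $B$ projects to a basis of $\Lf/[\Lf,\Lf]$, and, as in the proof of the previous proposition, the assignment of these elements to the standard generators extends to a Lie algebra automorphism $\alpha$ of $\Lf$. Since the property-$F$ condition is preserved by $\alpha$, I may assume $e_i = x_i$ for $i \le g$. If $g \ge 3$, the triple $(x_1, x_2, x_3)$ immediately works, since $[x_i,[x_i,x_j]]$ for distinct $i,j \in \{1,2,3\}$ is a standard basic commutator, nonzero in $\Lf^3 \ne 0$.

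The main obstacle is the subcase $g = 2$, $c \ge 4$, where the third entry of the triple must be drawn from the basis $\{e_3, \dots, e_n\}$ of $[\Lf,\Lf]$. The projection $[\Lf,\Lf] \to \Lf^2/\Lf^3 \cong \C$ is surjective, so some $e_j$ has the form $e_j = \lambda\,[x_1,x_2] + r$ with $\lambda \ne 0$ and $r \in \Lf^3$. Using the grading by degree, I compute
\[
[x_i,[x_i,e_j]] = \lambda\,[x_i,[x_i,[x_1,x_2]]] + [x_i,[x_i,r]],
\]
where the first term is a nonzero element of the degree-$4$ component (using $c \ge 4$ and the fact that $[x_i,[x_i,[x_1,x_2]]]$ is, up to sign, a Hall basic commutator) while the second term lies in $\Lf^5$, i.e.\ in strictly higher degree. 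They cannot cancel, so $[x_i,[x_i,e_j]] \ne 0$ for $i = 1, 2$, and the triple $(x_1, x_2, e_j)$ witnesses property $F$.

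For the converse the only remaining case is $\Lf = N(3,2)$; I use the standard basis $\{x_1, x_2, z, u, v\}$ with $z = [x_1,x_2]$, $u = [x_1, z]$, $v = [x_2, z]$. Since $\Lf^4 = 0$ and $[\Lf,\Lf]$ is abelian, any pair of basis elements involving at least one of $z, u, v$ has both $[a,[a,b]]$ and $[b,[b,a]]$ lying in $\Lf^4 = 0$ or $[\Lf^2,\Lf^2] = 0$, hence is $2$-Engel; the only non-$2$-Engel pair is $(x_1, x_2)$. Thus every triple of distinct basis elements contains at least one $2$-Engel pair, so this basis witnesses the failure of property $F$. I expect the degree-separation step in the $g = 2, c \ge 4$ subcase to be the sole subtle point; the rest is direct verification.
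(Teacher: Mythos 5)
Your overall strategy coincides with the paper's: normalize an arbitrary basis by the automorphism carrying the standard generators onto a $g$-subset that projects to a basis of $\Lf/[\Lf,\Lf]$, exhibit the triple $(x_1,x_2,x_3)$ when $g\ge 3$, and for $g=2$ use a basis vector with nonzero $[x_1,x_2]$-component together with a grading argument. You in fact go further than the printed proof, which only establishes the ``if'' direction: your treatment of $c\le 2$, of $g=1$, and your verification that the standard basis of $N(3,2)$ has $(x_1,x_2)$ as its only non-$2$-Engel pair (so that every triple contains a $2$-Engel pair) correctly supplies the converse.

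There is, however, one inaccurate step in the $g=2$, $c\ge 4$ case. After normalizing $e_1=x_1$, $e_2=x_2$, you assert that the remaining basis vectors $e_3,\dots,e_n$ form a basis of $[\Lf,\Lf]$, and hence that some $e_j=\la\,[x_1,x_2]+r$ with $r\in\Lf^3$. That is false: the $e_j$ for $j\ge 3$ need only complete a basis of $\Lf$, so in general $e_j=\la_1x_1+\la_2x_2+\la_3[x_1,x_2]+v$ with $v\in\Lf^3$ and $\la_1,\la_2$ possibly nonzero. What one can guarantee --- and what the paper uses --- is that some $e_j$ has $\la_3\neq 0$, since the images of $e_1,\dots,e_n$ must span the three-dimensional space $\Lf/\Lf^3$. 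Your computation of $[x_i,[x_i,e_j]]$ therefore omits the degree-$3$ contribution coming from the linear part of $e_j$. Fortunately your own degree-separation idea absorbs the correction: the degree-$4$ graded component of $[x_i,[x_i,e_j]]$ is still $\la_3\,\ad(x_i)^2([x_1,x_2])$, which is a nonzero multiple of a basic commutator in $N(c,2)$ for $c\ge 4$, while the additional terms lie in degrees $3$ and $\ge 5$; hence no cancellation occurs and the triple $(x_1,x_2,e_j)$ still witnesses property $F$. With that one repair the proof is complete and is essentially the paper's argument, with the ``short computation'' the paper leaves to the reader actually carried out.
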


\begin{proof}
Suppose first that $c\ge 3$ and $g\ge 3$. Denote the generators of $\Lf=N(c,g)$ by $x_1,\ldots,
x_g$. Let $e_1,\ldots ,e_n$ be any basis of $\Lf$. It contains a subset, say  $\{ e_1,\ldots ,e_g\}$,
which is a basis for $\Lf/[\Lf,\Lf]$. We obtain a Lie algebra automorphism $\al\colon \Lf\ra \Lf$ 
by setting $\al(x_i)=e_i$ for all $1\le i\le g$. We may choose three different vectors from
$\{ e_1,\ldots ,e_g\}$, say $e_1,e_2,e_3$. Since $[x_i,[x_i,x_j]]\neq 0$ for all $i\neq j$ in
$\{1,2,3\}$, we obtain  $[e_i,[e_i,e_j]]\neq 0$ for all $i\neq j$ in $\{1,2,3\}$. This shows that
$\Lf$ has property $F$. \\
Now suppose that $c\ge 4$ and $g=2$. Denote the generators by $x_1,x_2$ and let $e_1,\ldots ,e_nx$ 
be again any basis of $\Lf$, such that $e_1$ and $e_2$ are a basis of $\Lf/[\Lf,\Lf]$. 
We may choose $e_3$ such that $e_1,e_2,e_3$ is a basis of $\Lf/[\Lf,[\Lf,\Lf]]$. As before, define 
an automorphism by $\al(e_i)=x_i$ for $i=1,2$. Then we may write 
\[
\al(e_3)=\la_1 x_1+\la_2x_2+\la_3[x_1,x_2]+v
\]
for some $\la_i\in \C$ with $\la_3\neq 0$ and $v\in [\Lf,[\Lf,\Lf]]$. A short computation shows that
the three terms $[e_1,[e_1,e_2]],[e_1,[e_1,e_3]], [e_2,[e_2,e_3]]$ are nonzero, by applying $\al$ and using
$[x_i,[x_i,x_j]]\neq 0$ for all $i\neq j$. Hence $\Lf$ has property $F$.
\end{proof}

The relation to periodic prederivations is as follows.

\begin{prop}\label{5.17}
Let $\Lg$ be a Lie algebra having property $F$. Then $\Lg$ does not admit a periodic prederivation.
\end{prop}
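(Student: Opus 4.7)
The plan is to argue by contradiction, applying property $F$ to a basis of eigenvectors of a would-be periodic prederivation and deriving a numerical contradiction among the eigenvalues.

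First, suppose $\Lg$ admits a periodic prederivation $P$ with $P^m=\id$. Since the minimal polynomial of $P$ divides $T^m-1$, $P$ is semisimple, and we may choose a basis $x_1,\ldots,x_n$ of eigenvectors with eigenvalues $\alpha_1,\ldots,\alpha_n$ that are $m$-th roots of unity (so $|\alpha_i|=1$). Apply property $F$ to this particular basis: it yields a triple, which I rename $x_1,x_2,x_3$ with eigenvalues $\alpha,\beta,\gamma$, such that for every pair $\{i,j\}\subseteq\{1,2,3\}$ at least one of $[x_i,[x_i,x_j]]$ or $[x_j,[x_j,x_i]]$ is nonzero.

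Next I would compute how $P$ acts on iterated brackets of eigenvectors. Using the prederivation identity for $x=y=x_i$, $z=x_j$, a short calculation gives
\[
P([x_i,[x_i,x_j]])=(2\alpha_i+\alpha_j)\,[x_i,[x_i,x_j]].
\]
Consequently, whenever $[x_i,[x_i,x_j]]\neq 0$, the scalar $2\alpha_i+\alpha_j$ is an $m$-th root of unity, so $|2\alpha_i+\alpha_j|=1$.

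The key numerical step is the following: if $\mu,\nu\in\C$ satisfy $|\mu|=|\nu|=|2\mu+\nu|=1$, then $\nu=-\mu$. Indeed, $|2\mu+\nu|^2=4|\mu|^2+|\nu|^2+4\Re(\mu\overline\nu)=5+4\Re(\mu\overline\nu)$, so $|2\mu+\nu|=1$ forces $\Re(\mu\overline\nu)=-1$, and since $|\mu\overline\nu|=1$ this gives $\mu\overline\nu=-1$, i.e.\ $\nu=-\mu$. Applied to each of the three pairs $\{1,2\},\{1,3\},\{2,3\}$, property $F$ therefore forces
\[
\beta=-\alpha,\qquad \gamma=-\alpha,\qquad \gamma=-\beta.
\]
Combining the first two gives $\gamma=\beta=-\alpha$, while the third then gives $\gamma=-\beta=\alpha$; hence $\alpha=-\alpha$, so $\alpha=0$. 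This contradicts $|\alpha|=1$ and completes the proof.

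The argument has no real obstacle — the only delicate point is that in each pair property $F$ offers an alternative ($[x_i,[x_i,x_j]]\neq 0$ or $[x_j,[x_j,x_i]]\neq 0$), but by the symmetry of the conclusion $\nu=-\mu$ both alternatives produce the same constraint $\alpha_i+\alpha_j=0$, so the three pairs together are enough to trap the eigenvalues.
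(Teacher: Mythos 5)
Your proof is correct and follows essentially the same route as the paper: diagonalize the periodic prederivation, apply property $F$ to the eigenbasis, and use that $|\mu|=|\nu|=|2\mu+\nu|=1$ forces $\nu=-\mu$, so all three eigenvalues of the triple must vanish. You have merely spelled out the computation $|2\mu+\nu|^2=5+4\Re(\mu\overline\nu)$ that the paper leaves implicit.
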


\begin{proof}
Suppose that $\Lg$ admits a periodic prederivation $P$. Then $P$ is semisimple, and there
is a basis of eigenvectors $e_1,\ldots ,e_n$ with $P(e_i)=\al_i e_i$ and $\abs{\al_i}=1$ for all 
$i=1,\ldots ,n$.
Since $\Lg$ has property $F$, there exists a triple $(e_1,e_2,e_3)$ such that for all $i\neq j$ in
$\{ 1,2,3\}$ either $[e_i,[e_i,e_j]]$ or $[e_j,[e_j,e_i]]$ (or both) are nonzero eigenvalues, i.e.,
either $\abs{2\al_i+\al_j}=1$ or $\abs{2\al_j+\al_i}=1$. In other words, for all $i\neq j$ we have 
$\al_i=-\al_j$. It follows $\al_1=\al_2=\al_3=0$, which is a contradiction.
\end{proof}

\begin{cor}
The free-nilpotent Lie algebra $N(c,g)$ admits a periodic prederivation if and only if
$c\le 2$, or if $c=3,g=2$.
\end{cor}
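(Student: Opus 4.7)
The plan is to split the biconditional into four regimes of $(c,g)$ and, in each regime, either invoke a previously-proved result or exhibit an explicit periodic prederivation. The heavy lifting for the ``only if'' direction has already been done in Propositions $\ref{5.16}$ and $\ref{5.17}$ (and in Corollary $\ref{free-n}$), so the only genuine construction required is for the exceptional case $c=3,\,g=2$.

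For the \emph{if} direction, note first that if $c\le 2$ then $N(c,g)$ is at most two-step nilpotent, so by the remark immediately after the definition of periodic prederivation, it admits periodic prederivations of every order $m\ge 1$. For the exceptional case $(c,g)=(3,2)$, I would construct a prederivation directly. On $N(3,2)$ with basis $x_1,x_2,e_3=[x_1,x_2],e_4=[x_1,e_3],e_5=[x_2,e_3]$, the candidate is
\[
P=\diag(1,-1,1,1,-1).
\]
Since $N(3,2)$ is three-step nilpotent, $[\Lg,[\Lg,[\Lg,\Lg]]]=0$ and also $[[\Lg,\Lg],[\Lg,\Lg]]=0$ (by Jacobi), so any triple bracket $[a,[b,c]]$ vanishes unless $a,b,c$ all lie outside $[\Lg,\Lg]$. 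The prederivation identity therefore needs to be verified only on triples of $x_1$s and $x_2$s, and a short expansion in each case confirms it; for instance, $P([x_1,[x_1,x_2]])=P(e_4)=e_4$ equals $[x_1,e_3]+[x_1,[x_1,x_2]]+[x_1,[x_1,-x_2]]=e_4+e_4-e_4=e_4$. Since $P^2=\id$, $P$ is a periodic prederivation.

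For the \emph{only if} direction, the remaining cases are those with $c\ge 3$ and $(c,g)\ne (3,2)$, where $g\ge 2$ (since $g=1$ forces $N(c,g)$ to be abelian). If $c\ge 5$, Corollary $\ref{free-n}$ directly rules out periodic prederivations. Otherwise $c\in\{3,4\}$ and $(c,g)\ne(3,2)$, which places us in the regime ``$c\ge 3,g\ge 3$ or $c\ge 4,g=2$''; then Proposition $\ref{5.16}$ gives that $N(c,g)$ has property $F$, and Proposition $\ref{5.17}$ then denies the existence of a periodic prederivation. The only real obstacle is the explicit verification for $N(3,2)$, but the three-step nilpotency collapses the prederivation identity to a small finite check.
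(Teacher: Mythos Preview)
Your proof is correct and follows essentially the same route as the paper's: Propositions~\ref{5.16} and~\ref{5.17} handle the ``only if'' direction, and for $(c,g)=(3,2)$ you exhibit the same diagonal prederivation (the paper writes it as $P=\diag(\al,-\al,1,\al,-\al)$, of which your $P=\diag(1,-1,1,1,-1)$ is the instance $\al=1$). Your appeal to Corollary~\ref{free-n} for $c\ge 5$ is harmless but redundant, since that range is already covered by Proposition~\ref{5.16}; on the other hand, your explicit reduction of the prederivation identity on $N(3,2)$ to triples in $\{x_1,x_2\}$ is a welcome detail that the paper leaves implicit.
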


\begin{proof}
If not $c\le 2$ or $c=3,g=2$, then $N(c,g)$ does not admit a periodic prederivation
by propositions $\ref{5.16}$ and  $\ref{5.17}$. Conversely, if $c\le 2$, then any
endomorphism is a prederivation. Hence there exists a periodic prederivation.
If $c=3,g=2$, then we have the Lie algebra $N(3,2)$ with basis $x_1,\ldots ,x_5$ and
brackets $[x_1,x_2]=x_3$,  $[x_1,x_3]=x_4$, $[x_2,x_3]=x_5$. A periodic prederivation is
given by $P=\diag (\al,-\al,1,\al,-\al)$. 
\end{proof}

%%%%%%%%%%%%%%%%%%%%%%%%%%%%%%%%%%%%%%%%%%%%%%%%%%%%%%%%%%%%%%%%%%%%%%%%%%%%%%

\end{document}